\tikzstyle{block} = [rectangle, draw, fill=white!20,
\tikzstyle{title} = [text width=7em, text centered, font=\bfseries]
\tikzstyle{line} = [draw, -latex']
\newtheorem{theorem}{Theorem}
\newtheorem{corollary}[theorem]{Corollary}
\newtheorem{proposition}[theorem]{Proposition}
\newtheorem{model}[theorem]{Model}
\theoremstyle{definition}
\newtheorem{example}{Example}
\newcommand{\cQ}{\mathcal{Q}}
\newcommand{\cY}{\mathcal{Y}}
\newcommand{\cV}{\mathcal{V}}
\newcommand{\bX}{X}
\newcommand{\hcQ}{\widehat{\mathcal{Q}}}
\newcommand{\Gv}{\;\;\big|\;\;}
\newcommand{\proj}{\cP}
\newcommand{\pow}{\text{Pow}}
\newcommand{\sF}{\mathscr{F}}
\newcommand{\msF}{m\mathscr{F}}
\newcommand{\sC}{\mathscr{C}}
\newcommand{\hM}{\widehat{M}}
\newcommand{\hI}{\widehat{I}}
\newcommand{\cI}{\mathcal{I}}
\newcommand{\leqAS}{\overset{\textrm{a.s.}}{\leq}}
\newcommand{\cL}{\mathcal{L}}
\newcommand*\mystrut{\vrule width0pt height0pt depth1.5ex\relax}
\newcommand{\underlabel}{\underbracket[1pt][.5pt]{\mystrut \quad\;\; \sub \quad\;\; }}
\newcommand{\sampOrData}{data }
\newcommand{\capSampOrData}{Data }
\begin{document}

\title{Optimal Inference After Model Selection}
\author{William Fithian\footnote{To whom correspondence should be addressed}}
\affil{Department of Statistics, University of California Berkeley}
\author{Dennis L. Sun}
\affil{Department of Statistics, California Polytechnic State University}
\author{Jonathan Taylor}
\affil{Department of Statistics, Stanford University}
\maketitle

\begin{abstract}
  To perform inference after model selection, we propose controlling the {\em selective type I error}; i.e., the error rate of a test given that it was performed. By doing so, we recover long-run frequency properties among selected hypotheses analogous to those that apply in the classical (non-adaptive) context. Our proposal is closely related to \sampOrData splitting and has a similar intuitive justification, but is more powerful. Exploiting the classical theory of~\citet{lehmann1955completeness}, we derive most powerful unbiased selective tests and confidence intervals for inference in exponential family models after arbitrary selection procedures. For linear regression, we derive new selective $z$-tests that generalize recent proposals for inference after model selection and improve on their power, and new selective $t$-tests that do not require knowledge of the error variance.
\end{abstract}

\section{Introduction}\label{sec:intro}

\noindent A typical statistical investigation can be thought of as consisting of two stages:
\begin{description}
\item[1. Selection:] The analyst chooses a statistical model for the data at hand, and formulates testing, estimation, or other problems in terms of unknown aspects of that model.
\item[2. Inference:] The analyst investigates the chosen problems using the data and the selected model.
\end{description}

Informally, the selection stage determines what questions to ask, and the inference stage answers those questions. Most statistical methods carry an implicit assumption that selection is {\em non-adaptive} --- that is, choices about which model to use, hypothesis to test, or parameter to estimate, are made before seeing the data. {\em Adaptive selection} (also known colloquially as ``data snooping'') violates this assumption, formally invalidating any subsequent inference.

In some cases, it is possible to specify the question prior to collecting the data---for instance, if the data are governed by some known physical law. However, in most applications, the choice of question is at least partially guided by the data. For example, we often perform exploratory analyses to decide which predictors or interactions to include in a regression model or to check whether the assumptions of a test are satisfied. The goal of this paper is to codify what it means for inference to be valid in the presence of adaptive selection and to propose methods that achieve this ``selective validity.''

If we do not account properly for adaptive model selection, the resulting inferences can have troubling frequency properties, as we now illustrate with an example.

\begin{example}[File Drawer Effect]
\label{ex:file_drawer}
Suppose one or more scientific research groups make $n$ independent measurements of $n$ quantities, ${Y_i \sim N(\mu_i, 1)}$. They focus only on the apparently large effects, selecting (say) only the indices $i$ for which $|Y_i|>1$, i.e.
\[ \hI = \{i : |Y_i| > 1\}. \]
Each scientist wishes to test $H_{0,i}: \mu_i = 0$ for his own $i \in \hI$ at significance level $\alpha = 0.05$. Most practitioners intuitively recognize that the nominal test that rejects $H_{0,i}$ when $|Y_i| > 1.96$ is invalidated by the selection.

What exactly is ``invalid'' about this test? After all, the probability of falsely rejecting a given $H_{0,i}$ is still $\P(|Y_i| > 1.96) = 0.05$, since $H_{0,i}$ is simply not tested at all most of the time. Rather, the troubling feature is that the error rate among the hypotheses {\em selected} for testing is possibly much higher than $\alpha$.
To be precise, let $n_0$ be the number of true null effects and suppose $n_0 \to \infty$ as $n\to\infty$. Then, in the long run, the fraction of errors among the true nulls we test is
\begin{align}
\frac{\text{\# false rejections}}{\text{\# true nulls selected}} &= \genfrac{}{}{}{}
{\raisebox{10pt}{$\dfrac{1}{n_0}\;\;\raisebox{3pt}{$\displaystyle\underset{{i:\,H_{0,i}\text{ true}}}\sum$} 1\{ i\in \hI,\ \text{reject $H_{0,i}$} \}$}}
{\raisebox{-7pt}{$\dfrac{1}{n_0}\;\;\raisebox{3pt}{$\displaystyle\underset{{i:\,H_{0,i}\text{ true}}}\sum$} 1\{ i \in \hI \}$}}
\nonumber\\[5pt]
&\to \frac{\P_{H_{0,i}}(i \in \hI,\ \text{reject $H_{0,i}$})}{\P_{H_{0,i}}(i \in \hI)} \nonumber\\
&= \P_{H_{0,i}}(\text{reject $H_{0,i}$}\ |\ i \in \hI), \label{eq:selective_type_1_error}
\end{align}
which for the nominal test is $\Phi(-1.96) / \Phi(-1)  \approx .16$.

Thus, we see that \eqref{eq:selective_type_1_error}, the probability of a false rejection conditional on selection, is a natural error criterion to control in the presence of selection. In this example, we can directly control \eqref{eq:selective_type_1_error} at level $\alpha=0.05$ simply by finding the critical value $c$ solving
\begin{align*}
\P_{H_{0,i}}\left( |Y_i| > c\ \big|\ |Y_i| > 1 \right) &= 0.05.
\end{align*}
In this case $c = 2.41$, which is more stringent than the nominal 1.96 cutoff.
\end{example}

This paper will develop a theory for inference after selection based on controlling the {\em selective type I error rate} \eqref{eq:selective_type_1_error}. Our guiding principle is:
\begin{center}
The answer must be valid, given that the question was asked.
\end{center}

For all its simplicity, Example~\ref{ex:file_drawer} can be regarded as a stylized model of science. Imagine that each $Y_i$ represents an estimated effect size from a scientific study. However, only the large estimates are ever published---a caricature which may not be too far from the truth, as recently demonstrated by \citet{franco14}. To compound the problem, there may be many reasonable methodologies to choose from, even once the analyst has decided roughly what scientific question to address \citep{gelman2013garden}. Because of the resulting selection bias, the error rate among published claims may be very high, leading even to speculation that ``most published research findings are false'' \citep{ioannidis2005most}. Thus, selection effects may be a partial explanation for the replicability crisis reported in the scientific community \citep{yong2012replication} and the popular media \citep{johnson2014new}.

The setting of Example~\ref{ex:file_drawer} has been studied extensively in the literature of simultaneous and selective inference, and several authors have proposed adjusting for selection by means of conditional inference. \citet{zollner2007overcoming} and \citet{zhong2008bias} construct selection-adjusted estimators and intervals for genome-wide association studies for genes that pass a fixed initial significance threshold, based on a conditional Gaussian likelihood.  \citet{cohen1989two} obtain unbiased estimates for the mean of the population whose sample mean is largest by conditioning on the ordering of the observed sample means, and \citet{sampson2005drop} and \citet{sill2009drop} apply the same idea to obtain estimates for the best-performing drug in an adaptive clinical trial design. \citet{hedges1984estimation} and \citet{hedges1992modeling} propose methods to adjust for the file drawer effect in meta-analysis when scientists only publish significant results.

Another framework for selection adjustment is proposed by \citet{benjamini2005false}, who consider the problem of constructing intervals for a number $R$ of parameters selected after viewing the data. Letting $V$ denote the number of non-covering intervals among those constructed, they define the {\em false coverage-statement rate} (FCR) as the expected fraction $V/\max(R,1)$ of non-covering intervals. Controlling the FCR at level $\alpha$ thus amounts to ``coverage on the average, among selected intervals.'' As we will see further in Section~\ref{sec:multiple}, FCR control is closely related to the selective error control criterion we propose. In fact, \citet{weinstein2013selection} employ conditional inference to construct FCR-controlling intervals in the context of Example~\ref{ex:file_drawer}. \citet{rosenblatt2014selective} propose a similar method for finding correlated regions of the brain, also with a view toward FCR control.

\subsection{Conditioning on Selection}\label{sec:conditioning}

In classical statistical inference, the notion of ``inference after selection'' does not exist. The analyst must specify the model, as well as the hypothesis to be tested, in advance of looking at the data. A classical level-$\alpha$ test for a hypothesis $H_0$ under model $M$ must control the usual or {\em nominal type I error rate}:
\begin{equation}
\P_{M, H_0}(\text{reject $H_0$}) \leq \alpha.
\label{eq:nominal_type_1}
\end{equation}
The subscript in~(\ref{eq:nominal_type_1}) reminds us that the probability is computed under the assumption that the data $Y$ are generated from model $M$, and $H_0$ is true; if $M$ is misspecified, there are no guarantees on the rejection probability.

In most statistical practice, it is unrealistic to rule out model selection altogether: statisticians are trained to check their models and to tweak them if they diagnose a problem (to a purist, even model checking is suspect, since it leaves open the possibility that the model will change after we see the data). We will argue that if the model and hypothesis are selected adaptively, we should instead control the selective type I error rate
\begin{equation}
\P_{M,H_0}( \text{reject $H_0$}\ |\ \text{$(M, H_0)$ selected} ) \leq \alpha.
\label{eq:selective_type_1}
\end{equation}

One can argue that models and hypotheses are practically never truly fixed but are chosen randomly, since they are based on the outcomes of previous experiments in the (random) scientific process. Typically, we ignore the random selection and use classical tests that control \eqref{eq:nominal_type_1}, implicitly assuming that the randomness in selecting $M$ and $H_0$ is independent of the data used for inference. In that case,
\begin{equation}
\P_{M, H_0}( \text{reject $H_0$} \ |\ \text{$(M, H_0)$ selected} ) = \P_{M, H_0}(\text{reject $H_0$}).
\label{eq:independence}
\end{equation}

While it may seem pedantic to point out that model selection is random if based on previous experiments, this viewpoint justifies a common prescription for what to do when previous experiments do not dictate a model. If it is possible to split the data $Y = (Y_{1}, Y_{2})$ with $Y_{1}$ independent of $Y_{2}$, then we can imitate the scientific process by setting aside $Y_{1}$ for selection and $Y_{2}$ for inference. If selection depends on $Y_{1}$ only, then any nominal level-$\alpha$ test based on the value of $Y_{2}$ will satisfy \eqref{eq:independence}, so the nominal test based on $Y_{2}$ also controls the selective error \eqref{eq:selective_type_1}.

This meta-algorithm for generating selective procedures from nominal ones is called {\em data splitting} or {\em sample splitting}. The idea dates back at least as far as \citet{cox1975note}, and, despite the paucity of literature on the topic, is common wisdom among practitioners. For example, it is customary in genetics to use one cohort to identify loci of interest and a separate cohort to confirm them \citep{sladek2007genome}. \citet{wasserman2009high} and \citet{meinshausen2009p} discuss data-splitting approaches to high-dimensional inference.

Data splitting owes much of its popularity to its transparent justification, which even a non-expert can appreciate: if we imagine that $Y_{1}$ is observed ``first,'' then we can proceed to analyze $Y_{2}$ as though model selection took place ``ahead of time.'' Equation (\ref{eq:independence}) guarantees that this temporal metaphor will not lead us astray even if it does not describe how $Y_{1}$ and $Y_{2}$ were actually collected.

\capSampOrData splitting elegantly solves the problem of controlling selective error, but at a cost. It not only reduces the amount of data available for inference, but also reduces the amount of data available for selection. Furthermore, it is not always possible to split the data into independent parts, as in the case of autocorrelated spatial and time series data.

In this article, we propose directly controlling the selective error rate \eqref{eq:selective_type_1} by conditioning on the event that $(M,H_0)$ is selected. As with \sampOrData splitting, we treat the data as though it were revealed in stages: in the first stage, we ``observe'' just enough data to resolve the decision of whether to test $(M,H_0)$, after which we can treat the data $(Y \gv (M, H_0)\text{ selected})$ as ``not yet observed'' when the second stage commences.

The intuition of the above paragraph can be expressed formally in terms of the filtration
\begin{equation}\label{eq:sampleCarving}
  \sF_0 \underlabel_{\text{used for selection}} \sF(\1_A(Y))
  \underlabel_{\text{used for inference}} \sF(Y),
\end{equation}
where $\sF(Z)$ denotes the $\sigma$-algebra generated by a random variable $Z$ (informally, everything we know about the data after observing $Z$), $\sF_0$ is the trivial $\sigma$-algebra (representing complete ignorance), and $A$ is the {\em selection event} $\{(M, H_0)\text{ selected}\}$. We can think of ``time'' as progressing from left to right in~(\ref{eq:sampleCarving}). In stage one, we learn just enough to decide whether to test $(M,H_0)$, and no more, advancing our state of knowledge from $\sF_0$ to $\sF(\1_A(Y))$. We then begin stage two, in which we discover the actual value of $Y$, advancing our knowledge to $\sF(Y)$. Because our selection decision is made at the end of stage one, everything revealed during stage two is fair game for inference.

In effect, controlling the type I error conditional on $A$ prevents us from appealing to the fact that $Y\in A$ as evidence against $H_0$. Even if $Y\in A$ is extremely surprising under $H_0$, we still will not reject unless we are surprised anew in the second stage.
In this sense, conditioning on a random variable discards the information it carries about any parameter or hypothesis of interest. In contrast to data splitting, which can be viewed as conditioning on $Y_{1}$ instead of $\1_A(Y_{1})$, we advocate discarding as little information as possible and reserving the rest for stage two. This frugality results in a more efficient division of the information carried by $Y$, which we call {\em \sampOrData carving}.

\subsection{Outline}

In Section~\ref{sec:selInf} we formalize the problem of selective inference, discuss general properties of selective error control, and address key conceptual questions. Conditioning on the selection event effectively discards the information used for selection, but some information is left over for second-stage inference. We will also see that a major advantage of selective error control is that it allows us to consider only one model at a time when designing tests and intervals, even if {\em a priori} there are many models under consideration.

If $\cL(Y)$, the law of random variable $Y$, follows an exponential family model, then for any event $A$, $\cL(Y \gv A)$ follows a closely related exponential family model. As a result, selective inference dovetails naturally with the classical optimality theory of \citet{lehmann1955completeness}; Section~\ref{sec:exfam} briefly reviews this theory and derives most powerful unbiased selective tests in arbitrary exponential family models after arbitrary model selection procedures. Because conditioning on more data than is necessary saps the power of second-stage tests, \sampOrData splitting yields inadmissible selective tests under general conditions.

Section~\ref{sec:approxInt} gives some general strategies for
computing rejection cutoffs for the tests prescribed in Section~\ref{sec:exfam}, while Sections~\ref{sec:linReg}--\ref{sec:examples} derive selective tests in specific examples. Section~\ref{sec:linReg} focuses on the case of linear regression, generalizing the recent proposals of~\citet{tibshirani2014exact}, \citet{lee2016exact}, and others. We
derive new, more powerful selective $z$-tests, as well as selective $t$-tests that do not require knowledge of the error variance $\sigma^2$.

Several simulations in Section~\ref{sec:simulation} compare the post-lasso selective $z$-test with \sampOrData splitting, and illustrate a {\em selection--inference tradeoff}, between using more data in the initial stage and reserving more information for the second stage. Section~\ref{sec:multiple} compares and contrasts selective inference with multiple inference, and Section~\ref{sec:discussion} concludes.

\section{The Problem of Selective Inference}\label{sec:selInf}

\subsection{Example: Regression and the Lasso}\label{sec:lassoRegression}

In the previous section, we motivated the idea of conditioning on selection. Arguably, the most familiar example of this ``selection'' is variable selection in linear regression. In regression, the observed data $Y \in \R^n$ is assumed to be generated from a multivariate normal distribution
\begin{equation}\label{eq:fullModel}
  Y \sim N_n(\mu, \;\sigma^2 I_n).
\end{equation}
The goal is to model the mean $\mu$ as a linear function of predictors $X_j$, $j=1,\ldots, p$. To obtain a more parsimonious model (or simply an identifiable model when $p > n$), researchers will often use only a subset $M \sub\{1,\ldots, p \}$ of the predictors. Each subset $M$ leads to a different statistical model corresponding to the assumption $\mu=\bX_M \beta^{M}$, where $\bX_M$ denotes the matrix consisting of columns $X_j$ for $j\in M$. Then, it is customary to report tests of $H_{0,j}^M: \beta^M_j = 0$ for each coefficient in the model. If $M$ was chosen in a data-dependent way, then to control selective error we must condition on having selected $(M, H_{0,j}^M)$, which in this case is the same as conditioning on having selected model $M$.

There are many data-driven methods for variable selection in linear regression, ranging from AIC minimization to forward stepwise selection, cf. \citet{hastie2009elements}. We will consider one procedure in particular, based on the lasso, mostly because selective inference in the context of the lasso \citep{lee2016exact} was a main motivation for the present work. The lasso \citep{tibshirani1996regression} provides an estimate of $\beta \in \R^p$ that solves
\begin{equation}
\hat\beta = \underset{\beta}{\text{argmin}}\ ||Y - X\beta||_2^2 + \lambda ||\beta||_1,
\end{equation}
where $X$ is the ``full'' matrix consisting of all $p$ predictors. The first term is the usual least-squares objective, while the second term encourages many of the coefficients to be exactly zero. Because of this property, it makes sense to define the model ``selected'' by the lasso to be the set of variables with non-zero coefficients, i.e.,
\[ \hM(Y) = \{ j: \hat\beta_j \neq 0 \}. \]

Notice that $\hM(Y)$ can take on up to $2^p$ possible values, one for each subset of $\{1, ..., p\}$. The regions $A_M = \{y: \hM(y) = M\}$ form a partition of $\R^n$ into regions that correspond to each model. To control the selective error after selecting a particular $M$, we must condition on the event that $Y$ landed in $A_M$. The partition for a lasso problem with $p=3$ variables in $n=2$ dimensions is shown in Figure \ref{fig:lasso_partition}. An explicit characterization of the lasso partition can be found in \citet{lee2016exact}; see also \citet{harris2014visualizing} for an interactive visualization of the way the lasso partitions the sample space. A different selection procedure would partition the sample space differently; characterizations of the partitions in forward stepwise selection and marginal screening can be found in \citet{loftus2014significance} and \citet{lee2014marginal}, respectively.

\begin{figure}
% source code: code/misc_plots/figs_lasso.py
\begin{center}
\includegraphics[trim = 23mm 23mm 23mm 23mm, clip=TRUE, width=.4\textwidth]{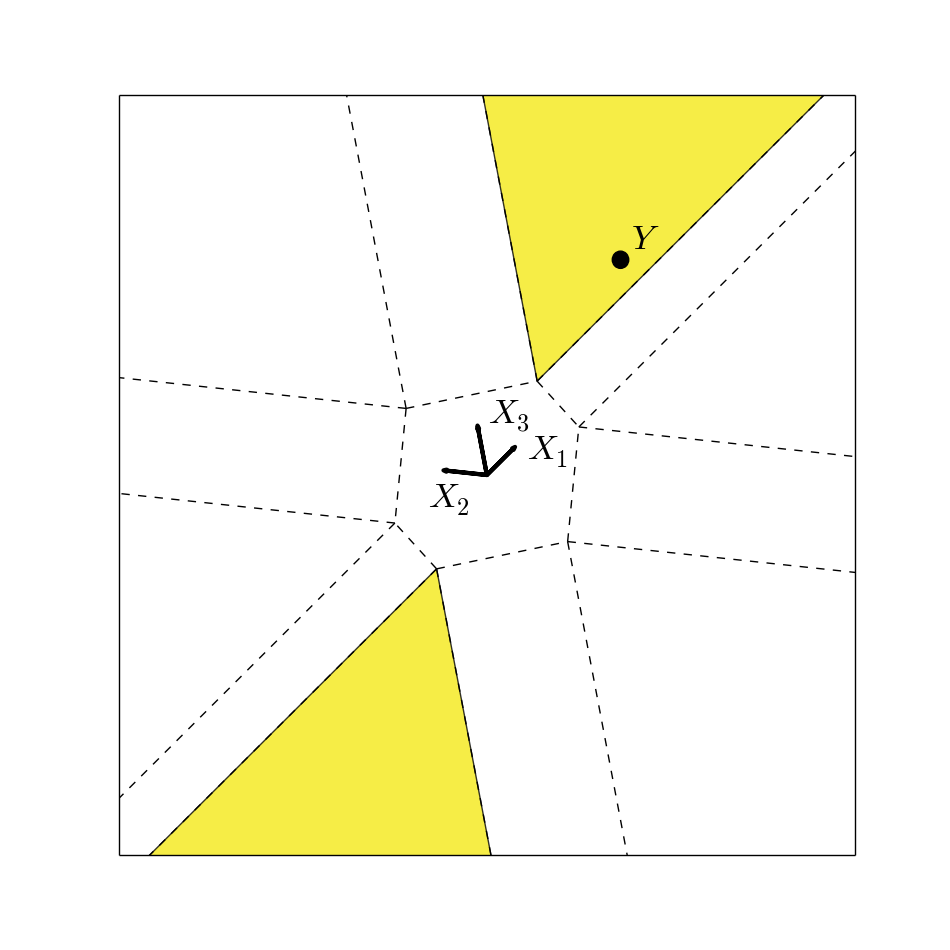}
\end{center}
\caption{An example of the lasso with $n=2$ observations and $p=3$ variables.Tests are based on the distribution of $Y$, conditional on its landing in the highlighted region.}
\label{fig:lasso_partition}
\end{figure}

Imagine that in stage one, we loaded the data into a software package and computed $\hM(Y)$, but we remain otherwise ignorant of the value $Y$ --- that is, we have observed {\em which} of the regions $Y$ falls into but not {\em where} $Y$ is in that region. Now that we have chosen the model, we will construct tests of $H_{0,j}^M: \beta^M_j = 0$ for each of the selected variables. In the example shown in Figure \ref{fig:lasso_partition}, we selected variables $1$ and $3$ and thus test the two hypotheses
\begin{align*}
H^{\{1, 3\}}_{0, 1}&: \beta^{\{1, 3\}}_1 = 0 \\
H^{\{1, 3\}}_{0, 3}&: \beta^{\{1, 3\}}_3 = 0.
\end{align*}
Notice that we have to be careful to always specify the model along with the coefficient, since the coefficient for variable $j$ does not necessarily have a consistent interpretation across different models. Each regression coefficient summarizes the effect of that variable, adjusting for the other variables in the model. For example, ``What is the effect of IQ on salary?'' is a genuinely different question from ``What is the effect of IQ on salary, after adjusting for years of education?''  Both questions are meaningful, but they are fundamentally different.\footnote{We use the word ``effect'' here informally to refer to a regression coefficient, recognizing that regression cannot establish causal claims on its own.}

Having chosen the model $M$ and conditioned on the selection, we will base our tests on the precise location of $Y$, which we do not know yet. Conditionally, $Y$ is not Gaussian, but it does follow an exponential family. As a result, we can appeal to the classical theory of \citet{lehmann1955completeness} to construct tests or confidence intervals for its natural parameters, which are $\beta^M$ if $\sigma^2$ is known, and otherwise are $(\beta^M/\sigma^2,1/\sigma^2)$.

With this concrete example in mind, we will now develop a general framework of selective inference that is much more broadly applicable.
Because we are explicitly allowing models and hypotheses to be random, it is necessary to carefully define our inferential goals.
We first discuss selective inference in the context of hypothesis testing. The closely related developments for confidence intervals will follow in Section~\ref{sec:ci}.

\subsection{Selective Hypothesis Tests}\label{sec:setting}

We now introduce notation that we will use for the remainder of the article. Assume that our data $Y$ lies in some measurable space $(\cY, \sF)$, with unknown sampling distribution $Y\sim F$. The analyst's task is to pose a reasonable probability model $M$ --- i.e., a family of distributions which she believes contains $F$ --- and then carry out inference based on the observation $Y$.

Let $\cQ$ denote the {\em question space} of inference problems $q$ we might tackle. A hypothesis testing problem is a pair $q=(M,H_0)$ of a model $M$ and null hypothesis $H_0$, by which we mean a submodel $H_0\sub M$.\footnote{We identify a ``null hypothesis'' like $H_0:\,\mu(F)=0$ with the corresponding subfamily or ``null model'' ${\{F\in M:\, \mu(F)=0\}}$. This should remind us that the error guarantees of a test do not necessarily extend beyond the model it was designed for.} We write $M(q)$ and $H_0(q)$ for the model and hypothesis corresponding to $q$. Without loss of generality, we assume $H_0(q)$ is tested against the alternative hypothesis $H_1(q)=M(q)\setminus H_0(q)$. To avoid measurability issues, we will assume throughout that $\cQ$ is countable, although our framework can be extended to uncountable $\cQ$ with additional care.

In Section~\ref{sec:lassoRegression} where we test each variable in a selected regression model, the question space is
\[ \cQ = \{ (M, H^M_{0, j}): M \sub \{1, \ldots, p\}, j \in M \}. \]
Note our slight abuse of notation in using $M$ interchangeably to refer both to a subset of variable indices and to the corresponding probability model $\left\{N_n(X_M\beta_M, \sigma^2I_n): \beta\in\R^{|M|}\right\}$.

We model selective inference as a process with two distinct stages:
\begin{description}
\item[1. Selection:] From the collection $\cQ$ of possible questions, the analyst selects a subset $\hcQ(Y)\sub \cQ$ to test, based on the data.
\item[2. Inference:] The analyst performs a hypothesis test of $H_0(q)$ against $M(q)\setminus H_0(q)$ for each $q \in \hcQ(Y)$.
\end{description}

In the case of the simple regression example shown in Figure \ref{fig:lasso_partition}, where we selected variables 1 and 3, $\hcQ$ would consist of the hypotheses for each of the two variables in the model:
\[ \hcQ(Y) = \left\{ \left(\{1, 3\}, H^{\{1, 3\}}_{0, 1}\right), \left(\{1, 3\}, H^{\{1, 3\}}_{0, 3}\right) \right\}. \]

A correctly specified model $M$ is one that contains the true sampling distribution $F$. Importantly, we expressly do not assume that all --- or any --- of the candidate models are correctly specified. Because the analyst must choose $M$ without knowing $F$, she could choose poorly, in which case there may be no formal guarantees on the behavior of the test she performs in stage two. Some degree of misspecification is the rule rather than the exception in most real statistical applications, whether models are specified adaptively or non-adaptively. Our analyst would be in the same position if she were to select a (probably wrong) model using $Y$, then use that model to perform a test on new data $Y^*$ collected in a confirmatory experiment. See Section~\ref{sec:modelWrong} for further discussion of this issue.

For our purposes, a {\em hypothesis test} is a function $\phi(y)$ taking values in $[0, 1]$, representing the probability of rejecting $H_0$ if $Y=y$. In most cases, the value of the function will be either 0 or 1, but with discrete variables, randomization may be necessary to achieve exact level $\alpha$.

To adjust for selection in testing $q$, we condition on the event that the question was asked, which we describe by the selection event
\begin{equation}
A_q = \{q \in \hcQ(Y) \},
\label{eq:A_q}
\end{equation}
i.e., the event that $q$ is among the questions asked. In general, the selection events for different questions are not disjoint. In the regression example, where we test $H^M_{0, j}$ if and only if model $M$ is selected, conditioning on $A_q$ is equivalent to simply conditioning on $\hM$. By convention we take $\phi_q(Y) = 0$ for $Y\notin A_q$ to reflect the idea that if a hypothesis is not tested then it is not rejected; note this convention does not affect the selective properties of $\phi_q$.

In selective inference, we are mainly interested in the properties of a test $\phi_q$ for a question $q$, conditional on $A_q$. We say that $\phi_q$ controls {\em selective type I error} at level $\alpha$ if
\begin{equation}\label{eq:selT1}
\E_F\left[\phi_q(Y) \gv A_q\right] \leq \alpha, \;\; \text{ for all } F\in H_0(q).
\end{equation}
and define its {\em selective power function} as
\begin{equation}\label{eq:selPow}
  \pow_{\phi_q}(F \gv A_q) = \E_{F}[\phi_q(Y) \gv A_q].
\end{equation}
Because $\cQ$ is countable, the only relevant $q$ are those for which $\P(A_q) > 0$.

Notice that only the model $M(q)$ and hypothesis $H_0(q)$ are relevant for defining the selective level and power of a test $\phi_q$. This means that in designing valid $\phi_q$, we can concentrate on one $q$ at a time, even if there are many mutually incompatible candidate models in $\cQ$. As long as each $\phi_q$ controls the selective error at level $\alpha$ given its selection event $A_q$, then a global error is also controlled:
\begin{equation}\label{eq:oneAtTime}
  \frac{\E\left[\text{\# false rejections}\right]}{\E\left[\text{\# true nulls selected} \right]} \leq \alpha,
\end{equation}
provided that the denominator is finite. Equation~(\ref{eq:oneAtTime}) holds for countable $\cQ$ regardless of the dependence structure across different $q$. The fact that we can design tests one $q$ at a time makes it much easier to devise selective tests in concrete examples, which we take up in Sections~\ref{sec:exfam}--\ref{sec:examples}.

\subsection{Comparison to Familywise Error Rate}

Selective error control is neither weaker nor stronger than control of the familywise error rate (FWER), which is the probability of rejecting any true null hypothesis:
\begin{equation}
  \text{FWER} = \P_F(\phi_q(Y) = 1 \text{ for any } q \text{ with } H_0(q) \ni F).
\end{equation}
Although the FWER is usually considered the most conservative control guarantee, it does not scale easily across different researchers: suppose that in Example \ref{ex:file_drawer}, each observation $Y_i$ is collected by a different scientific research team at a different university, with each team then publishing the nominal level-$\alpha$ test if $|Y_i|>1$ and otherwise moving on to another project. At the level of a single research group and experiment, FWER is controlled, but there is a major unaccounted-for multiplicity problem if we consider the discipline as a whole.

By contrast, selective error control scales naturally across multiple research groups and requires no coordination among groups. If each research team in a discipline controls the selective error rate for each of its own experiments, then the discipline as a whole will achieve long-run control of the type I error rate among true {\em selected} null hypotheses, just as they would if there were no selection.
\begin{proposition}[Discipline-Wide Error Control]\label{prop:disciplineWide}
Suppose there are $n$ independently operating research groups in a scientific discipline with a shared, countable question space $\cQ$. Research group $i$ collects data $Y_{i}\sim F_{i}$, applies selection rule $\hcQ_{i}(Y_{i})\sub \cQ$, and carries out selective level-$\alpha$ tests $(\phi_{q,i}(y_{i}), q\in\hcQ_{i})$.
Assume each research group has probability at least $\delta>0$ of carrying out at least one test of a true null, and for some common $B<\infty$,
\[\E_{F_{i}}\left[|\hcQ_{i}(Y_{i})|^2\right] \leq B,
\quad\text{ for all } i.\]
Then as $n$ grows, the discipline as a whole achieves long-run control over the frequentist error rate
\begin{equation}\label{eq:longRun}
  \limsup_{n\to\infty} \frac{\textnormal{\# false rejections}}{\textnormal{\# true nulls selected}} \leqAS \alpha.
\end{equation}
\end{proposition}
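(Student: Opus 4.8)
The plan is to reduce the almost-sure statement \eqref{eq:longRun} to a strong law of large numbers for independent (though not identically distributed) summands, using the one-question-at-a-time bound behind \eqref{eq:oneAtTime} to control means and the moment bound $B$ to control variances. First I would introduce per-group bookkeeping: for group $i$, let $T_i=\#\{q\in\hcQ_i(Y_i):F_i\in H_0(q)\}$ be the number of true nulls it selects and $V_i$ the number of those it (randomly) rejects, so the numerator and denominator of \eqref{eq:longRun} are $V_n=\sum_{i=1}^n V_i$ and $T_n=\sum_{i=1}^n T_i$. Because the groups operate independently, the pairs $(V_i,T_i)$ --- functions of $Y_i$ together with group $i$'s independent test randomization --- are independent across $i$. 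Setting $D_i=V_i-\alpha T_i$, we have $V_n-\alpha T_n=\sum_{i=1}^n D_i$, so it suffices to show $\limsup_n \big(\sum_{i=1}^n D_i\big)/T_n\leqAS 0$.

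Second, I would bound the means. Conditioning on $Y_i$ to integrate out the randomization, selective level-$\alpha$ control gives $\E_{F_i}\big[\phi_{q,i}(Y_i)\,\1\{q\in\hcQ_i\}\big]\le\alpha\,\P_{F_i}(q\in\hcQ_i)$ for each true-null question $q$ (this holds trivially when $\P(A_{q,i})=0$). Summing over the true-null questions yields $\E[V_i]\le\alpha\,\E[T_i]$, i.e. $\E[D_i]\le 0$ for every $i$.

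Third, I would invoke a strong law. Since $0\le V_i\le T_i\le|\hcQ_i(Y_i)|$, we get $|D_i|\le T_i$ and hence $\mathrm{Var}(D_i)\le\E[T_i^2]\le\E\big[|\hcQ_i|^2\big]\le B$. Kolmogorov's criterion $\sum_i \mathrm{Var}(D_i)/i^2\le B\sum_i i^{-2}<\infty$ then gives $\frac1n\sum_{i=1}^n(D_i-\E[D_i])\to 0$ a.s., and combined with $\E[D_i]\le 0$ this yields $\limsup_n \frac1n\sum_{i=1}^n D_i\leqAS 0$. Applying the same criterion to the bounded independent indicators $\1\{T_i\ge1\}$, whose means are at least $\delta$ by assumption, and using $T_i\ge\1\{T_i\ge1\}$, gives $\liminf_n \frac1n T_n\ \overset{\textrm{a.s.}}{\ge}\ \delta>0$.

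Finally, write $V_n/T_n=\alpha+\big(\tfrac1n\sum_i D_i\big)\big/\big(\tfrac1n T_n\big)$ and take $\limsup$. The main obstacle --- and the reason the two auxiliary limits are needed separately --- is that \eqref{eq:longRun} is a ratio of two random sums, so one cannot apply a law of large numbers to the quotient directly; the delicate step is to combine a numerator whose $\limsup$ is $\le 0$ with a denominator that grows a.s. linearly and is bounded away from $0$. The positive lower bound $\delta$ is exactly what prevents the denominator from degenerating into a $0/0$ indeterminacy. Once $\tfrac1n T_n\ge\delta/2$ eventually, a short case analysis on the sign of $a_n:=\tfrac1n\sum_i D_i$ shows $a_n/(\tfrac1n T_n)\le (2/\delta)\,a_n^+$, where $a_n^+\to 0$ a.s. because $\limsup_n a_n\le 0$; hence $\limsup_n V_n/T_n\leqAS\alpha$.
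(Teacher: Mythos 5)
Your proof is correct and follows essentially the same route as the paper's: both reduce the claim to Kolmogorov's strong law of large numbers for independent, non-identically distributed summands (variances bounded uniformly via $B$), use the per-group inequality $\E[V_i]\le\alpha\,\E[T_i]$ coming from selective level-$\alpha$ control applied one question at a time, and use the $\delta$ assumption to keep the denominator growing linearly so the ratio cannot degenerate. The only cosmetic difference is your choice of centering --- you work with $D_i=V_i-\alpha T_i$ and lower-bound $T_n$ via the indicators $\1\{T_i\ge 1\}$, whereas the paper normalizes both sums by $\E\bigl[\sum_i R_i\bigr]\ge n\delta$ --- but the substance of the argument is identical.
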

The proof is deferred to Appendix~\ref{sec:disciplineWideProof}. %Though the assumption of independence across research groups can be weakened without necessarily affecting the conclusion, we do not pursue such generalizations.

There is no counterpart to Proposition~\ref{prop:disciplineWide} for other popular error rates such as the false discovery rate (FDR) \citep{benjamini1995controlling} or familywise error rate (FWER). Section~\ref{sec:multiple} discusses further the relationship between selective error control and other common error rates in multiple inference.

\subsection{Selective Confidence Intervals}\label{sec:ci}

If the goal is instead to form confidence intervals for a parameter $\theta(F)$, it is more convenient to think of $\cQ$ as containing pairs $q = (M, \theta(\cdot))$ of a model and a parameter. By analogy to~(\ref{eq:selT1}), we will call a set $C(Y)$ a {\em $(1-\alpha)$ selective confidence set} if
\begin{equation}\label{eq:selConf}
  \P_F(\theta(F) \in C(Y) \gv A_q) \geq 1 - \alpha, \;\;\text{ for all } F\in M.
\end{equation}

The next result establishes that selective confidence sets can be obtained by inverting selective tests, as one would expect by analogy to the classical case.

\begin{proposition}[Duality of Selective Tests and Confidence Sets]
Suppose we form a confidence interval for $\theta(F)$ on the event $A_q$. Suppose also that on this event, we form a test $\phi_t$ of $H_{0, t} = \{F:\,\theta(F) = t\}$ for all $t$. Let $C(Y)$ be the set of $t$ for which $\phi_t$ does not (always) reject:
\begin{equation}
  C(Y) = \left\{t: \phi_t(Y) < 1 \right\}.
\end{equation}
If each $\phi_t$ is a selective level-$\alpha$ test, then $C(Y)$ is a selective $(1-\alpha)$ confidence set.
\end{proposition}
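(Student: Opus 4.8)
The plan is to adapt the classical test-inversion duality to the conditional (selective) setting, where every probability and expectation is computed given the selection event $A_q$. The essential observation is that, by the very definition of $C$, the coverage event $\{\theta(F)\in C(Y)\}$ coincides with the non-rejection event of the single test indexed by the true parameter value, so the problem reduces to controlling one test at the true $t$.

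First I would fix an arbitrary $F\in M$ and write $t=\theta(F)$ for the true value of the parameter. Since $\theta(F)=t$, we have $F\in H_{0,t}$, so by hypothesis $\phi_t$ is a selective level-$\alpha$ test of $H_{0,t}$, i.e. $\E_F[\phi_t(Y)\gv A_q]\le\alpha$. Next I would rewrite the coverage event: because $t$ is a fixed scalar and $C(Y)=\{s:\phi_s(Y)<1\}$, membership of the true value in the confidence set is exactly the event that $\phi_t$ does not reject with certainty,
\[
  \{\theta(F)\in C(Y)\}=\{t\in C(Y)\}=\{\phi_t(Y)<1\}.
\]
It then remains to bound $\P_F(\phi_t(Y)<1\gv A_q)$ from below, equivalently to bound $\P_F(\phi_t(Y)=1\gv A_q)$ from above.

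The one step requiring care is the passage from the expectation bound on $\phi_t$ to a probability bound on the certain-rejection event, since $\phi_t$ may be randomized and take values strictly between $0$ and $1$. Here I would invoke the pointwise inequality $\1\{\phi_t(Y)=1\}\le\phi_t(Y)$, valid because $\phi_t\in[0,1]$. Taking conditional expectations given $A_q$ yields $\P_F(\phi_t(Y)=1\gv A_q)=\E_F[\1\{\phi_t(Y)=1\}\gv A_q]\le\E_F[\phi_t(Y)\gv A_q]\le\alpha$, and combining with the previous display gives
\[
  \P_F(\theta(F)\in C(Y)\gv A_q)=1-\P_F(\phi_t(Y)=1\gv A_q)\ge 1-\alpha.
\]
Since $F\in M$ was arbitrary, this establishes \eqref{eq:selConf}, so $C(Y)$ is a selective $(1-\alpha)$ confidence set. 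I expect no genuine obstacle beyond this handling of randomization: the strict inequality in the definition of $C(Y)$ is precisely what makes the indicator bound applicable, and the entire argument goes through verbatim with $\P(\cdot\gv A_q)$ in place of the unconditional probability used in the classical duality.
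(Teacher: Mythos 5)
Your proof is correct and follows exactly the paper's argument: identify the non-coverage event with $\{\phi_{\theta(F)}(Y)=1\}$, bound its conditional probability by $\E_F[\phi_{\theta(F)}(Y)\gv A_q]$ via the pointwise inequality $\1\{\phi=1\}\le\phi$, and invoke the selective level-$\alpha$ property. The paper compresses this into a single displayed chain of inequalities; your write-up simply makes the same steps explicit.
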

\begin{proof}
The selective non-coverage probability is
\begin{equation*}
\P_F(\theta(F) \notin C(Y) \gv A_q) \ = \ \P_F(\phi_{\theta(F)}(Y) = 1 \gv A_q) \ \leq\ \E_F\left[\phi_{\theta(F)}(Y) \gv A_q\right] \ \leq\ \alpha.
\end{equation*}
\end{proof}

\subsection{Conditioning Discards Information}\label{sec:consumes}

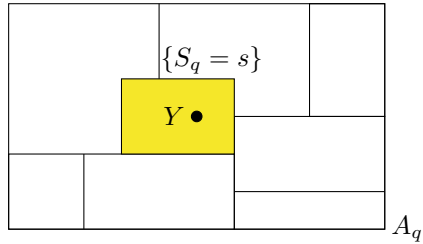
\begin{figure}
\begin{center}
\begin{tikzpicture}
\draw (0,0) -- (5,0) -- (5,3) -- (0,3) -- cycle;
\draw (0,0) -- (3,0) -- (3,1) -- (0,1) -- cycle;
\draw (3,0) -- (3,1.5) -- (5,1.5) -- (5,0) -- cycle;
\draw (4,1.5) -- (4,3) -- (5,3) -- (5,1.5) -- cycle;
\draw (2,2) -- (2,3);
\draw (1,0) -- (1,1);
\draw (3,0.5) -- (5,0.5);
\draw[fill=yellow!90!gray] (1.5,1) -- (1.5,2) -- (3,2) -- (3,1) -- cycle;
\draw[fill] (2.5,1.5) circle (2pt);
\node at (2.2,1.5) {$Y$};
\node at (5.3,0) {$A_q$};
\node at (2.7, 2.25) {$\{ S_q = s \}$};
\end{tikzpicture}
\end{center}
\caption{Instead of conditioning on the selection event $A_q$ that question $q$ is asked, we can condition on a finer event, the value of the random variable $S_q$. We call $S_q$ the {\em selection variable}.}
\label{fig:finer}
\end{figure}

Because performing inference conditional on a random variable effectively disqualifies that variable as evidence against a hypothesis, we will typically want to condition on as little data as possible in stage two. Even so, some selective inference procedures condition on more than $A_q$. For example, \sampOrData splitting can be viewed as inference conditional on $Y_{1}$, the part of the data used for selection. More generally, we say a {\em selection variable} is any variable $S_q(Y)$ whose level sets partition the sample space more finely than $A_q$ does; i.e., $A_q \in \sF(S_q)$. Informally, we can think of conditioning on a finer partition of $A_q$, as shown in Figure~\ref{fig:finer}.

We say $\phi$ controls the {\em selective type I error with respect to $S_q$} at level $\alpha$ if the error rate is less than $\alpha$ given $S_q=s$ for $\{S_q=s\}\sub A_q$. More formally,
\begin{equation}\label{eq:selT1S}
  \E_F\left[\phi(Y)\1_{A_q}(Y) \gv S_q \right] \leqAS \alpha, \;\;\text{ for all } F\in H_0(q)
\end{equation}
Taking $S_q(y) = \1_{A_q}(y)$, the coarsest possible selection variable, recovers the baseline selective type I error in~(\ref{eq:selT1}). The definition of a selective confidence set may be generalized in the same way.

Generalizing~(\ref{eq:sampleCarving}) to finer selection variables gives
\begin{equation}\label{eq:sampleCarvingS}
  \sF_0 \underlabel_{\text{used for selection}} \sF(S(Y))
  \underlabel_{\text{used for inference}} \sF(Y),
\end{equation}
suggesting that the more we refine $S(Y)$, the less data we have left for second-stage inference. Indeed, the finer $S$ is, the more stringent is the requirement~(\ref{eq:selT1S}):
\begin{proposition}[Monotonicity of Selective Error]\label{prop:fineCont}
  Suppose $\sF(S_{1}) \sub \sF(S_{2})$. If $\phi$ controls the type I error rate at level $\alpha$ for $q=(M,H_0)$ w.r.t. the finer selection variable $S_{2}$, then it also controls the type I error rate at level $\alpha$ w.r.t. the coarser $S_{1}$.
\end{proposition}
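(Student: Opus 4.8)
The plan is to derive the claim directly from the tower property for conditional expectations together with the monotonicity of conditional expectation, since the two selective-level conditions in~\eqref{eq:selT1S} differ only in the $\sigma$-algebra on which we condition. Fix an arbitrary $F \in H_0$ and abbreviate the integrand as $X = \phi(Y)\1_{A_q}(Y)$, so that control at level $\alpha$ with respect to $S_i$ is precisely the statement $\E_F[X \gv S_i] \leqAS \alpha$. Since $\phi$ takes values in $[0,1]$ and $\1_{A_q}$ is an indicator, we have $0 \le X \le 1$, so every conditional expectation below is well defined and finite.

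First I would use the nesting $\sF(S_1) \sub \sF(S_2)$ to invoke the tower property and write
\[
  \E_F[X \gv S_1] = \E_F\big[\,\E_F[X \gv S_2]\,\gv S_1\big] \quad\text{a.s.}
\]
Next I would apply the hypothesis that $\phi$ controls the selective error with respect to the finer variable, namely $\E_F[X \gv S_2] \leqAS \alpha$. By monotonicity of conditional expectation --- if $U \leqAS V$ then $\E_F[U \gv S_1] \leqAS \E_F[V \gv S_1]$ --- taking $\E_F[\,\cdot\, \gv S_1]$ of both sides of this inequality (and using that the constant $\alpha$ is $\sF(S_1)$-measurable, so $\E_F[\alpha \gv S_1] = \alpha$) yields
\[
  \E_F\big[\,\E_F[X \gv S_2]\,\gv S_1\big] \leqAS \alpha .
\]
Combining the two displays gives $\E_F[X \gv S_1] \leqAS \alpha$, which is exactly control at level $\alpha$ with respect to $S_1$. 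As $F \in H_0$ was arbitrary, the conclusion holds for all $F \in H_0$, completing the argument.

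I do not expect any substantive obstacle here: the result is an immediate consequence of iterated expectations, and its only delicate aspect is bookkeeping with the almost-sure qualifiers. Concretely, the tower-property equality and the hypothesized inequality each hold only off an $F$-null set, so one should intersect these (finitely many) exceptional events and run the argument on their complement; because everything is carried out separately for each fixed $F$, no uniformity across $F$ is needed and the null sets may depend on $F$. This matches the intuition behind the filtration picture in~\eqref{eq:sampleCarvingS}: averaging the finer-conditional error $\E_F[X \gv S_2]$ over the coarser $\sigma$-algebra cannot push it above $\alpha$, so coarsening the selection variable only relaxes the requirement.
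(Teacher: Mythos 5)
Your proof is correct and is essentially identical to the paper's own one-line argument: both rest on the tower property $\E_F[\phi(Y)\1_{A_q}(Y) \gv S_1] = \E_F\bigl[\E_F[\phi(Y)\1_{A_q}(Y) \gv S_2] \gv S_1\bigr]$ followed by monotonicity of conditional expectation applied to the hypothesis $\E_F[\phi(Y)\1_{A_q}(Y)\gv S_2] \leqAS \alpha$. The extra bookkeeping you supply about exceptional null sets depending on $F$ is a fine elaboration but does not change the substance.
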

\begin{proof}
If $F\in H_0$, then
\begin{equation*}
  \E_F\left[\phi(Y)\1_{A}(Y) \gv S_{1}\right] \;\;=\;\; \E_F\left[\;\E_F\big[\phi(Y)\1_{A}(Y) \gv S_{2}\big] \gv S_{1}\right] \;\;\leqAS\;\; \alpha.
\end{equation*}
\end{proof}
Because $S(y) = \1_A(y)$ is the coarsest possible choice, a test controlling the type I error w.r.t. any other selection variable also controls the selective error in~(\ref{eq:selT1}). At the other extreme, if $S(y) = y$, then we cannot improve on the trivial ``coin-flip'' test $\phi(y)\equiv \alpha$. Proposition~\ref{prop:fineCont} suggests that we will typically sacrifice power as we move from coarser to finer selection variables.
Even so, refining the selection variable can be useful for computational reasons. For example, in the case of the lasso, by conditioning additionally on the signs of the nonzero $\hat\beta_j$, the selection event becomes a convex region instead of the union of up to $2^{|\hM|}$ disjoint convex regions \citep{lee2016exact}. Another valid reason to refine $S_q$ beyond $\1_{A_q}$ is to strengthen our inferential guarantees in a meaningful way; for example, we can achieve achieve false coverage-statement rate (FCR) control by choosing $S_q=(\1_{A_q}(Y),|\hcQ(Y)|)$ (see Section~\ref{sec:multiple}, Proposition~\ref{prop:fcrCont}).

\capSampOrData splitting corresponds to setting every selection variable equal to $S = Y_{1}$. As a result, \sampOrData splitting does not use all the information that remains after conditioning on $A$, as we see informally in the three-stage filtration
\begin{equation}\label{eq:ssWaste}
  \sF_0 \underlabel_{\text{used for selection}} \sF(\1_{A}(Y_{1}))
  \underlabel_{\text{wasted}} \sF(Y_{1})
  \underlabel_{\text{used for inference}} \sF(Y_{1}, Y_{2}).
\end{equation}
As we will see in Section~\ref{sec:admissibility}, this waste of information means that \sampOrData splitting is inadmissible under fairly general conditions.

We can quantify the amount of leftover information in terms of the Fisher information that remains in the conditional law of $Y$ given $S$. In a smooth parametric model, we can decompose the Hessian of the log-likelihood as
\begin{equation}\label{eq:infoPartition}
  \grad^2 \ell(\theta;\;Y) = \grad^2 \ell(\theta; \;S) + \grad^2 \ell(\theta; \;Y \gv S)
\end{equation}
The conditional expectation
\begin{equation}
  \cI_{Y\gv S}(\theta; S) = -\E\left[\grad^2 \ell(\theta;\;Y \gv S) \gv S\right]
\end{equation}
is the {\em leftover Fisher information} after selection at $S(Y)$ (the leftover information is essentially the same as the missing information of \citet{orchard1972missing}, but we find ``leftover'' to be a more intuitive descriptor than missing in this context since the information is at our disposal). 

Taking expectations in~(\ref{eq:infoPartition}), we obtain
\begin{equation}
  \E\left[\cI_{Y\gv S}(\theta; S)\right] \;=\; \cI_Y(\theta) - \cI_S(\theta) \;\preceq\; \cI_Y(\theta).
\end{equation}
Thus, on average, the price of conditioning on $S$ --- the price of selection --- is the information $S$ carries about $\theta$.\footnote{Note that we do not necessarily have $\cI_{Y\gv S}(\theta; S) \preceq \cI_Y(\theta)$ for every $S$. In fact there are interesting counterexamples where $\cI_{Y\gv A}(\theta) \gg \cI_{Y}(\theta)$ for certain $\theta$, but we will not take them up here.}
 In some cases this loss may be quite small, which a simple example elucidates.

\begin{example}\label{ex:univar}
Consider selective inference under the univariate Gaussian model
\begin{equation}
  Y \sim N(\mu, 1),
\end{equation}
after conditioning on the selection event $A = \{Y > 3\}$.

Figure~\ref{fig:infoUnivar} plots the leftover information as a function of $\mu$. If $\mu\ll 3$, there is very little information in the conditional distribution: whether $\mu = -10$ or $\mu=-11$, $Y$ is conditionally highly concentrated on 3. By contrast, if $\mu \gg 3$, then $\P_\mu(A) \approx 1$, the conditional law is practically no different from the marginal law, and virtually no information is lost in the conditioning.

Figure~\ref{fig:umpuUnivar} shows the confidence intervals that result from inverting the tests described in Section~\ref{sec:exfam}. When $Y\gg 3$, the interval essentially coincides with the nominal interval $Y \pm 1.96$ because there is hardly any selection bias and no real adjustment is necessary. By contrast, when $Y$ is close to 3 it is potentially subject to severe selection bias. This fact is reflected by the confidence interval, which is both longer than the nominal interval and centered at a value significantly less than $Y$.
\end{example}

\captionsetup[subfigure]{width=1.2\textwidth}
\begin{figure}
  \centering
  \begin{subfigure}[t]{0.4\textwidth}
    % source code: code/misc_plots/fisherInfo.r
    \includegraphics[width=\textwidth]{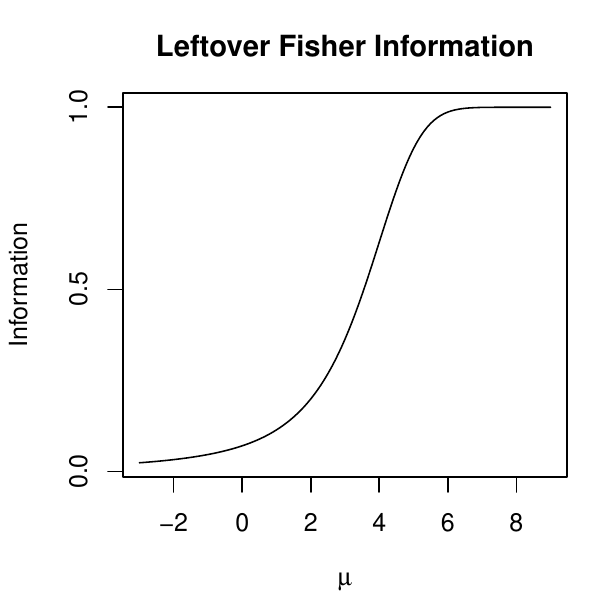}
    \caption{Leftover Fisher information as a function of $\mu$. For $\mu\ll 3$, then there is very little information in the conditional distribution, since $Y$ is conditionally highly concentrated on 3. For $\mu \gg 3$, then $\P_\mu(A) \approx 1$ and virtually no information is lost.}
    \label{fig:infoUnivar}
  \end{subfigure}
  \hspace{.1\textwidth}
  \begin{subfigure}[t]{0.4\textwidth}
    % source code: code/misc_plots/umpuUnivar.r
    \includegraphics[width=\textwidth]{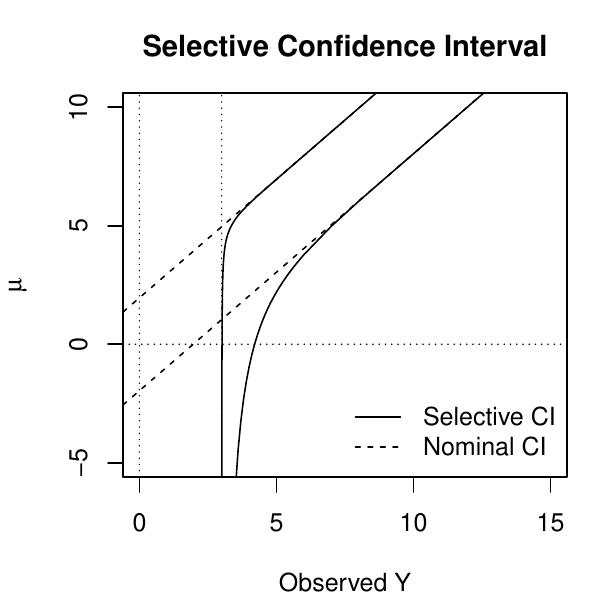}
    \caption{Confidence intervals from inverting the UMPU tests of Section~\ref{sec:exfam}. For $Y\gg 3$, the interval essentially coincides with the nominal interval ${Y \pm 1.96}$. For $Y$ close to 3, the wide interval reflects potentially severe selection bias.}
    \label{fig:umpuUnivar}
  \end{subfigure}
  \caption{Univariate Gaussian. $Y\sim N(\mu, 1)$ with selection event $A=\{Y>3\}$.}
\label{fig:univar}
\end{figure}

\subsection{Conceptual Questions}\label{sec:conceptual}

We now pause to address conceptual objections we have encountered when explaining our work. These objections can be most easily expressed, and answered, in the setting where there is a single selected model and a single selected hypothesis to test or confidence interval to construct (i.e., $\hcQ$ is always a singleton).

There is a common theme in every one of the conceptual objections to follow: they are all equally good grounds for objecting to data splitting, or for that matter, to selecting a model and hypothesis based on a prior experiment whose outcome was random. Thus, a good exercise is to ask ourselves how we would answer the same question if it were asked about data splitting; most likely, the same answer applies equally well to data carving.

\subsubsection{How can the model be random?}\label{sec:howRandom}

In our framework, inference is based on a statistical model $M$ that is allowed to be chosen randomly, based on the data $Y$. A common first reaction is that if the data are generated according to the model, and the model is selected based on the data, then the whole business is circular and nonsensical.

To resolve this conundrum, note that in our framework the {\em true} sampling distribution $F$ is not selected in any sense; it is entirely outside the analyst's control. The only thing selected is the {\em working model}, a tool the analyst uses to carry out inference, which may or may not include the true $F$. Thus, the sampling distribution $F$ comes first, then the data $Y$, then the model $M$. 

Random selection of models is not new and should not trouble or confuse us: $M$ would be just as random if it were selected via data splitting, or for that matter if it were based on a prior experiment.
Thoughtful skeptics may find reasons for concern about all of these approaches, believing that statistical testing is only appropriate when a model can be based purely on convincing theoretical considerations. We answer only that this point of view would rule out most scientific inquiries for which statistics is ever used. However, for those who are comfortable with choosing a random model using \sampOrData splitting or a previous experiment, we see no special reason to be any more concerned about choosing a random model using \sampOrData carving.

In any case, $M$ is by no means required to be random, and our conditional-inference framework applies in many interesting settings where $M$ is always the same pre-specified parametric or nonparametric model, but we adaptively choose which hypotheses to test or which parameters to estimate. For example, in our clinical trial example of Section~\ref{sec:clinical}, the statistical model is always the same but we choose which null hypotheses to test after inspecting the data. The same is true of the conditional confidence intervals of \citet{weinstein2013selection}, the saturated-model selective $z$-test proposed by \citet{lee2016exact} and discussed in~\ref{sec:fullModel}, and the rank verification methods proposed in \citet{hung2016rank}.

\subsubsection{What if the selected model is wrong?}\label{sec:modelWrong}

If we were writing about a topic other than selective inference, we might have begun by stating a formal mathematical assumption that the sampling distribution $F$ belongs to a known model $M$, and then devised a test $\phi$ that behaves well when $F\in M$. The same $\phi$ might not work well at all for $F\notin M$: for example, if we choose to apply the one-sample $t$-test of $\mu=0$ to a sample $Y_1,\ldots,Y_n$ whose observations are highly correlated, then the probability of rejection may be a great deal larger than the nominal $\alpha$, even if $\E[Y_i]=0$. This is not a mistake in the formal theory, nor does it make the $t$-test an inherently invalid test; rather, the validity or invalidity of a test is defined with respect to its behavior when $F\in H_0 \sub M$.

In any given application, the analyst must choose from among many statistical methods knowing that each one is designed to work under a particular set of parametric or nonparametric assumptions about $F$ --- i.e., under a particular model $M$. Because our theory encompasses both the choice and the subsequent analysis, it would not be sensible to assume that the analyst is infallible and always selects a correct model. Typically some candidate models $M$ are correctly specified, others are not, and the analyst can never know for sure which are which. Any model selection procedure using data splitting, data carving, or a prior experiment always carries a risk of selecting the wrong model, and in all cases the second-stage type I error guarantees are only in force when the model is correct.

Of course, the possibility of misspecification is not restricted to adaptive procedures like \sampOrData carving and \sampOrData splitting: selecting an inappropriate model {\em after} seeing the data leaves us no better or worse off than if we had chosen the same inappropriate model {\em before} seeing the data. The alternative to {\em adaptive} model selection is not {\em infallible} model selection, it is {\em non-adaptive} model selection.

There is a separate question of robustness: if $F\notin M$ but is ``close'' in some sense, we may still want our procedure to behave predictably. However, even if some model gives a reasonable approximation to $\cL(Y)$, there is no guarantee that the induced model for $\cL(Y \gv A)$ is reasonable, since conditioning can introduce new robustness problems.
For example, suppose that a test statistic $Z_n(Y)$ tends in distribution to $N(0,1)$ under $H_0$ as $n\to \infty$. In a non-selective setting, we might be comfortable modeling it as Gaussian as a basis for hypothesis testing. In this case it is also true that ${\cL(Z_n \gv Z_n > c)}$ converges to a truncated Gaussian law for any fixed $c\in \R$, but the approximation may be much poorer for intermediate values of $n$. Worse, if we use increasing thresholds $c_n\to\infty$ with $n$, the truncated Gaussian approximation may never become reasonable. Understanding the interaction between selective inference and asymptotic approximations is an area of active ongoing study; see \citet{tian2017asymptotics,tibshirani2015uniform,tian2015selective,taylor2016post} for subsequent works discussing asymptotics without Gaussian assumptions.

\subsubsection{Does the result have a marginal interpretation?}\label{sec:margInterp}

Consider an adaptive clinical trial in which we select the most promising subgroup of patients based on some preliminary analysis, and then report a confidence interval for the average treatment effect on that subgroup. For some realizations of the data, we might decide to return an interval for the effect on men over the age of 45, and for other realizations we might decide to return an interval for the effect on Hispanic women with high blood pressure. Let $S(Y)$ denote the selected subpopulation, a random region of covariate space, let $\theta(s)$ denote the true average treatment effect on a given subpopulation $s$, and let $C_s(Y)$ denote the confidence interval we construct for $\theta(s)$ when $S(Y)=s$.

We find that confusion often occurs when people attempt to interpret $C(Y) = C_{S(Y)}(Y)$ as a marginal confidence interval for $\theta(S(Y))$, the treatment effect on a {\em random} subpopulation. If we ran the experiment again with the same selection procedure, we might choose a completely different $S(Y)$, giving $C(Y)$ a completely different meaning, and 100 realizations of the data might produce 100 disjoint realizations of $C(Y)$, meant to cover 100 very different true parameter values.

While technically correct, the above interpretation is usually best avoided. Rather, we recommend thinking of each $C_s$ as a different confidence interval for a different {\em fixed} parameter $\theta(s)$, having nothing to do with $C_{s'}$ for $s'\neq s$. During the selection stage, we choose one $C_s$ to construct and leave the other intervals undefined.

In other words, the interval has no useful interpretation until the first stage is complete, after which $S(Y)$ is fixed: it is pointless to try to interpret an answer before we even decide what question to ask. It is true that we might have asked about a different parameter if the data had looked different. By the same token, we might have performed an entirely different experiment if our most recent grant application had been funded. Neither of these contingencies should be a source of confusion because experiments not performed, or parameters not selected, are irrelevant to the situation at hand.

\subsection{Prior Work on Selective Inference}\label{sec:prior}

This article takes its main inspiration from a
recent ferment of work on the problem of inference in linear regression models after model selection. \citet{lockhart2014significance} derive an asymptotic test for whether the nonzero fitted coefficients at a given knot in the lasso path contain all of the true nonzero coefficients. \citet{tibshirani2014exact} provided an exact (finite-sample) version of this result and extended it to the LARS path, while \citet{lee2016exact}, \citet{loftus2014significance}, and \citet{lee2014marginal} used similar approaches to derive exact tests for the lasso with a fixed value of regularization parameter $\lambda$, forward stepwise regression, and regression after marginal screening, respectively. All of the above approaches are derived assuming that the error variance $\sigma^2$ is known or an independent estimate is available.

The present work attempts to unify the above approaches under a common theoretical framework generalizing the classical optimality theory of \citet{lehmann1955completeness}, and elucidate previously unexplored questions of power. It also lets us generalize the results to the case of unknown $\sigma^2$, and to arbitrary exponential families after arbitrary selection events. Since the initial appearance of this work, it has been applied in many other settings; see \citet{taylor2015statistical} for a recent review.

Other works have viewed selective inference as a multiple inference problem. Recent work in this vein can be found in \citet{berk2013valid} and \citet{barber2015controlling}. Section~\ref{sec:multiple} argues that inference after model selection and multiple inference are distinct problems with different scientific goals; see \citet{benjamini2010simultaneous} for more discussion of this distinction. An empirical Bayes approach for selection-adjusted estimation can be found in \citet{efron2011tweedie}.

There has also recently been work on inference in high-dimensional linear regression models, notably \citet{belloni2011inference}, \citet{belloni2014inference}, \citet{zhang2014confidence}, \citet{javanmard2014hypothesis}, and \citet{van2014asymptotically}; see \citet{dezeure2015high} for a review. These works focus on approximate asymptotic inference for a fixed model with many variables, while we consider finite-sample inference after selecting a smaller submodel to focus our inferential goals.

\citet{leeb2005model,leeb2006can,leeb2008can} prove certain impossibility results regarding estimating the distribution of post-selection estimators. These results do not apply to our framework; under the statistical models we use, the post-selection distributions of our test statistics are known and thus do not require estimation.

The foregoing works are frequentist, as is this work. Because Bayesian inference conditions on the entire data set, conditioning first on a selection event typically has no operative effect on the posterior: if $p$ and $\pi$ are respectively the marginal likelihood and prior, then $p(Y \gv A, \theta)\cdot \pi(\theta \gv A) \propto p(Y \gv \theta)\cdot \pi(\theta)$ for $Y\in A$  \citep{dawid1994selection}. \citet{yekutieli2012adjusted} argues that in certain cases it is more appropriate to condition the likelihood on selection without changing the prior to reflect that conditioning, resulting in a posterior proportional to $p(Y \gv A, \theta)\cdot \pi(\theta)$. The credible intervals discussed in \citet{yekutieli2012adjusted} resemble the confidence intervals proposed in this article, and the discussion therein presents a somewhat different perspective on how and why conditioning can adjust for selection.

Though our goals are very different, our theoretical framework is in some respects similar to the conditional confidence framework of \citet{kiefer1976admissibility}, in which inference is made conditional on some estimate of the confidence with which a decision can be made. See also \citet{kiefer1977conditional, brownie1977ideas,brown1978contribution,berger1994unified}.

\citet{olshen1973conditional} discussed error control given selection in a two-stage multiple comparison procedure, in which an $F$-test is first performed, then Scheff\'{e}'s $S$-method applied if the $F$-test rejects. For large enough rejection thresholds, simultaneous coverage in the second stage is less than $1-\alpha$ conditional on rejection in stage one.

\section{Selective Inference in Exponential Families}\label{sec:exfam}

As discussed in Section~\ref{sec:setting}, we can construct selective tests ``one at a time'' for each model--hypothesis pair $(M, H_0)$, conditional on the corresponding selection event $A_q$ and ignoring any other models that were previously under consideration. This is because the other candidate models and hypotheses are irrelevant to satisfying~(\ref{eq:selT1}). For that reason, we suppress the explicit dependence on $q=(M,H_0)$ except where it is necessary to resolve ambiguity.

Our framework for selective inference is especially convenient when $M$ corresponds to a multiparameter exponential family
\begin{equation}\label{eq:exfam}
  Y \sim f_{\theta}(y) = \exp\{\theta'T(y) - \psi(\theta)\}\, f_0(y)
\end{equation}
with respect to some dominating measure. Then, the conditional distribution given $Y\in A$ for any measurable $A$ is another exponential family with the same natural parameters and sufficient statistics but different carrier measure and normalizing constant:
\begin{equation}
  (Y\gv Y\in A) \sim \exp\{\theta'T(y) -  \psi_A(\theta)\}\,
  f_0(y)\, \1_A(y)
\end{equation}
This fact lets us draw upon the rich theory of inference in
multiparameter exponential families.

\subsection{Conditional Inference and Nuisance Parameters}\label{sec:condInf}
Classically, conditional inference in exponential families arises as a means for inference in the presence of nuisance parameters, as in Model~\ref{mod:partExFam} below.

\begin{model}[Exponential Family with Nuisance Parameters]\label{mod:partExFam}
$Y$ follows a $p$-parameter exponential family with sufficient statistics $T(y)$ and $U(y)$, of dimension $k$ and $p-k$ respectively:
\begin{equation}\label{eq:partExFam}
  Y\sim f_{\theta,\zeta}(y)
  = \exp\{\theta'T(y) + \zeta 'U(y) - \psi(\theta, \zeta)\}\, f_{0}(y),
\end{equation}
with $(\theta,\zeta)\in \Theta\sub \R^{p}$ open.
\end{model}

Assume $\theta$ corresponds to a parameter of interest and $\zeta$ to an unknown nuisance parameter.
The conditional law $\cL(T(Y) \gv U(Y))$ depends only on $\theta$:
\begin{equation}\label{eq:efCond}
  \left(T\gv U=u\right) \sim g_{\theta}(t \gv u) = \exp\{\theta't -
    \psi_g(\theta\gv u)\}\, g_0(t\gv u),
\end{equation}
letting us eliminate $\zeta$ from the problem by conditioning on $U$. For $k=1$ (i.e., for $\theta\in \R$), we obtain a single-parameter family for $T$.

Consider testing the null hypothesis $H_0:\; \theta \in \Theta_0 \sub \Theta$ against the alternative $H_1:\; \theta \in \Theta_1 = \Theta\setminus \Theta_0$. We say a level-$\alpha$ selective test $\phi(y)$ is {\em selectively unbiased} if
\begin{equation}\label{eq:selUnbi}
  \pow_\phi(\theta \gv A) = \E_{\theta}[\phi(Y) \gv A] \geq \alpha, \quad \text{ for all }\theta\in \Theta_1.
\end{equation}
The condition (\ref{eq:selUnbi}) specializes to the usual definition of an unbiased test when there is no selection (when $A=\cY$). Unbiasedness rules out tests that privilege some alternatives to the detriment of others, such as one-sided tests when the alternative is two-sided.

A {\em uniformly most powerful unbiased} (UMPU) selective level-$\alpha$ test is one whose selective power is uniformly highest among all level-$\alpha$ tests satisfying~(\ref{eq:selUnbi}). A selectively unbiased confidence region is one that inverts a selectively unbiased test, and confidence regions inverting UMPU selective tests are called uniformly most accurate unbiased (UMAU). All of the above specialize to the usual definitions when $A=\cY$.

See \citet{lehmann2005testing} or \citet{brown1986fundamentals} for thorough reviews of the rich literature on testing in exponential family models. In particular, the following classic result of \citet{lehmann1955completeness} gives a simple construction of UMPU tests in exponential family models.
\begin{theorem}[\citet{lehmann1955completeness}]\label{thm:margUmpu}
  Under Model~\ref{mod:partExFam}
  with $k=1$, consider testing the hypothesis
  \begin{equation}\label{eq:uniNull}
    H_0:\,\theta = \theta_0 \quad \text{ against }
    \quad H_1:\,\theta \neq \theta_0
  \end{equation}
  at level $\alpha$.
  There is a UMPU test of the form $\phi(Y) = f(T(Y), U(Y))$ with
  \begin{equation}\label{eq:testForm}
    f(t,u) = \begin{cases}
        1 & t < c_1(u) \text{ or } t > c_2(u)\\
        \gamma_i & t = c_i(u)\\
        0 & c_1(u) < t < c_2(u)
      \end{cases}
  \end{equation}
  where $c_i$ and $\gamma_i$ are chosen to satisfy
   \begin{align}\label{eq:margSolveLev}
     \E_{\theta_0}\left[f(T, U)\gv U=u\right] &= \alpha \\
     \label{eq:margSolveUnbi}
     \E_{\theta_0}\left[T f(T, U)\gv U=u\right]
     &=  \alpha\,\E_{\theta_0}\left[T\gv U=u\right].
   \end{align}
\end{theorem}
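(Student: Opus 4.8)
The plan is to recognize Theorem~\ref{thm:margUmpu} as the classical two-sided UMPU construction for a one-parameter exponential family in the presence of nuisance parameters, and to prove it by reducing the problem \eqref{eq:uniNull} to a one-parameter problem conditional on $U$. The engine of the reduction is the fact that, on the boundary $\theta=\theta_0$, the statistic $U$ is complete sufficient for the nuisance parameter $\zeta$. I would therefore organize the argument as: (i) establish Neyman structure, turning the global unbiasedness requirement \eqref{eq:selUnbi} into the pointwise level condition \eqref{eq:margSolveLev}; (ii) differentiate the conditional power to obtain the unbiasedness condition \eqref{eq:margSolveUnbi}; and (iii) apply the generalized Neyman--Pearson lemma to show the two-sided form \eqref{eq:testForm} is optimal against every alternative at once.

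First I would establish Neyman structure. Since the parameter set $\Theta$ is open, fixing $\theta=\theta_0$ and letting $\zeta$ vary gives a full-rank exponential family, so $U$ is a complete sufficient statistic for $\zeta$ under $H_0$. The power function $\theta\mapsto\E_{\theta,\zeta}[\phi(Y)]$ of any test is continuous (indeed real-analytic) in an exponential family, so a level-$\alpha$ test obeying \eqref{eq:selUnbi} must attain its minimum value $\alpha$ all along the boundary, giving similarity $\E_{\theta_0,\zeta}[\phi(Y)]=\alpha$ for every $\zeta$. Completeness of $U$ then upgrades similarity to the conditional statement $\E_{\theta_0}[\phi(Y)\gv U=u]=\alpha$ for almost every $u$, which is exactly \eqref{eq:margSolveLev}.

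Next I would fix $u$ and work inside the one-parameter conditional family \eqref{eq:efCond}, where the task is to test $\theta=\theta_0$ against $\theta\neq\theta_0$ subject to conditional level $\alpha$. Writing $\beta_u(\theta)=\E_\theta[\phi\gv U=u]$ and differentiating under the integral sign gives $\beta_u'(\theta)=\E_\theta[T\phi\gv U=u]-\E_\theta[T\gv U=u]\,\E_\theta[\phi\gv U=u]$, using that $\E_\theta[T\gv U=u]$ is the derivative of the conditional log-normalizer $\psi_g(\theta\gv u)$. Unbiasedness forces $\theta_0$ to be an interior minimum of $\beta_u$, so $\beta_u'(\theta_0)=0$; together with \eqref{eq:margSolveLev} this yields \eqref{eq:margSolveUnbi}, and these two linear side conditions determine the constants $c_i(u)$ and $\gamma_i(u)$. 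Finally, for any fixed $\theta_1\neq\theta_0$ the generalized Neyman--Pearson lemma (cf.\ \citet{lehmann2005testing}) says the conditional test maximizing power at $\theta_1$ under the constraints \eqref{eq:margSolveLev}--\eqref{eq:margSolveUnbi} rejects when $g_{\theta_1}(t\gv u)>(k_1+k_2 t)\,g_{\theta_0}(t\gv u)$; since $g_{\theta_1}/g_{\theta_0}\propto e^{(\theta_1-\theta_0)t}$ is strictly convex in $t$, this region is the set where a convex function exceeds a line, namely $\{t<c_1(u)\}\cup\{t>c_2(u)\}$ as in \eqref{eq:testForm}. Because this shape is independent of $\theta_1$, the same test is optimal against every alternative simultaneously and is therefore UMPU.

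The conceptual heart, and the step I expect to demand the most care, is the Neyman-structure reduction: combining completeness of $U$ on the boundary with continuity of the power function to convert the global unbiasedness constraint into the clean conditional conditions \eqref{eq:margSolveLev}--\eqref{eq:margSolveUnbi}. The second delicate point is the convexity argument in the last step, which is what rules out one-sided or disconnected rejection regions and forces the genuinely two-sided form \eqref{eq:testForm}.
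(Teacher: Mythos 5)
The paper itself offers no proof of Theorem~\ref{thm:margUmpu}; it is quoted as a classical result of \citet{lehmann1955completeness}, with only the one-line gloss that \eqref{eq:margSolveLev} pins the power at $\theta_0$ and that \eqref{eq:margSolveUnbi} comes from setting the derivative of the power function to zero there. Your outline is the standard textbook argument (Chapter 4 of \citet{lehmann2005testing}): Neyman structure via completeness of $U$ on the null family, then the generalized Neyman--Pearson lemma applied conditionally on $U=u$, with strict convexity of $t\mapsto e^{(\theta_1-\theta_0)t}$ forcing the complement-of-an-interval form \eqref{eq:testForm} uniformly over the alternatives $\theta_1$. Steps (i) and (iii) are correct as written.

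Step (ii), however, has a genuine gap. You claim that unbiasedness forces $\theta_0$ to be an interior minimum of the \emph{conditional} power $\beta_u(\theta)=\E_\theta[\phi\mid U=u]$ for each fixed $u$. Unbiasedness constrains only the unconditional power $\beta(\theta,\zeta)=\E_{\theta,\zeta}[\phi(Y)]$; a test can be unbiased while its conditional power falls below $\alpha$ at some values of $u$ and compensates at others, so conditional unbiasedness for every $u$ is an a priori strictly stronger requirement. As written, your Neyman--Pearson step therefore establishes optimality only within this smaller class of conditionally unbiased tests, which does not yield the UMPU claim; and if you instead dropped the derivative constraint, the conditional NP solution at a fixed alternative $\theta_1>\theta_0$ would be a one-sided test that beats the two-sided one, so the constraint cannot simply be waved in. The repair uses completeness a second time, exactly parallel to your step (i): for fixed $\zeta$, differentiate the unconditional power in $\theta$ at $\theta_0$ (differentiation under the integral sign is valid in an exponential family), obtaining $0=\E_{\theta_0,\zeta}[T\phi]-\E_{\theta_0,\zeta}[T]\,\E_{\theta_0,\zeta}[\phi]=\E_{\theta_0,\zeta}[T\phi]-\alpha\,\E_{\theta_0,\zeta}[T]$, where the last equality uses the similarity established in step (i). Since the conditional law of $Y$ given $U$ is free of $\zeta$, this reads $\E_{\theta_0,\zeta}\bigl[\E_{\theta_0}[T\phi-\alpha T\mid U]\bigr]=0$ for all $\zeta$, and completeness of $U$ under the null family then gives \eqref{eq:margSolveUnbi} for almost every $u$, for \emph{every} unbiased level-$\alpha$ test. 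With that in place, your step (iii) shows the constructed test dominates every test satisfying the two constraints; since it satisfies them itself and dominates the trivial test $\phi\equiv\alpha$ (hence is unbiased), it is UMPU.
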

The condition (\ref{eq:margSolveLev}) constrains the power to be $\alpha$ at
$\theta=\theta_0$, and (\ref{eq:margSolveUnbi}) is obtained by
differentiating the power function and setting its derivative
to 0 at $\theta=\theta_0$.

Because $\cL( Y \gv A)$ is an exponential family, we can simply apply Theorem~\ref{thm:margUmpu} to the conditional law $\cL( Y \gv A)$ to obtain an analogous construction in the selective setting.

\begin{corollary}[UMPU Selective Tests]\label{cor:selUmpu}
   Under Model~\ref{mod:partExFam} with $k=1$, consider testing the hypothesis
   \begin{equation}\label{eq:uniNullSel}
     H_0:\,\theta = \theta_0 \quad \text{ against }
     \quad H_1:\,\theta     \neq \theta_0
   \end{equation}
   at selective level $\alpha$ on selection event $A$.
   There is a UMPU selective test of the form $\phi(Y) = f(T(Y),U(Y))$ with
   \begin{equation}
     f(t,u) = \left\{\begin{matrix}
         1 & t < c_1(u) \text{ or } t > c_2(u)\\
         \gamma_i & t = c_i(u)\\
         0 & c_1(u) < t < c_2(u)
       \end{matrix}\right.
   \end{equation}
   for which $c_i$ and $\gamma_i$ solve
   \begin{align}\label{eq:selSolveLev}
     \E_{\theta_0}\left[f(T, U)\gv  U=u,\; Y\in A\right] &= \alpha \\
     \label{eq:selSolveUnbi}
     \E_{\theta_0}\left[Tf(T, U)\gv U=u,\; Y\in A \right]
     &=  \alpha\,\E_{\theta_0}\left[T\gv U=u,\; Y\in A\right].
   \end{align}
\end{corollary}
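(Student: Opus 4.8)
The plan is to apply Theorem~\ref{thm:margUmpu} directly to the conditional law $\cL(Y\gv A)$, exploiting the fact (recorded just after~(\ref{eq:exfam})) that conditioning an exponential family on any measurable event preserves its exponential family structure. First I would check that $\cL(Y\gv A)$ verifies the hypotheses of Model~\ref{mod:partExFam}. Conditioning~(\ref{eq:partExFam}) on $\{Y\in A\}$ replaces the carrier $f_0(y)$ by $f_0(y)\1_A(y)$ and the log-normalizer $\psi(\theta,\zeta)$ by $\psi_A(\theta,\zeta)=\log\int_A \exp\{\theta'T(y)+\zeta'U(y)\}\,f_0(y)\,d\nu$, while leaving the natural parameters $(\theta,\zeta)$ and the sufficient statistics $(T,U)$ unchanged. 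Since $A\sub\cY$ and the integrand is nonnegative, $\psi_A(\theta,\zeta)\le\psi(\theta,\zeta)<\infty$ on $\Theta$, so the conditional family is again a $p$-parameter exponential family whose natural parameter space contains $\Theta$. Thus Model~\ref{mod:partExFam} applies verbatim to $\cL(Y\gv A)$, with the same scalar parameter of interest $\theta$ (so $k=1$) and the same nuisance $\zeta$, and the further conditional law $\cL(T\gv U=u,\,Y\in A)$ still depends only on $\theta$, exactly as in~(\ref{eq:efCond}).

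Next I would match the \emph{selective} notions under $F$ to the ordinary (nominal) notions computed under the conditional measure. By the definitions~(\ref{eq:selT1}),~(\ref{eq:selPow}), and~(\ref{eq:selUnbi}), the selective level, selective power, and selective unbiasedness of $\phi$ under $F$ are literally the nominal level, power, and unbiasedness of $\phi$ under $\cL(Y\gv A)$. Consequently, a test is a UMPU \emph{selective} level-$\alpha$ test for $q$ on the event $A$ if and only if it is an ordinary UMPU level-$\alpha$ test for the conditional family. Applying Theorem~\ref{thm:margUmpu} to that family then yields a UMPU test of the asserted two-sided form $\phi(Y)=f(T,U)$, with $f$ given by~(\ref{eq:testForm}) and cutoffs/randomization constants determined by the theorem's defining equations.

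Finally I would translate those equations. In the conditional family every unconditional expectation $\E_{\theta_0}[\,\cdot\,]$ is replaced by $\E_{\theta_0}[\,\cdot\gv Y\in A]$, and the conditioning on $U=u$ used in Theorem~\ref{thm:margUmpu} becomes conditioning on $\{U=u,\,Y\in A\}$; under this substitution the level equation~(\ref{eq:margSolveLev}) and the unbiasedness equation~(\ref{eq:margSolveUnbi}) become precisely~(\ref{eq:selSolveLev}) and~(\ref{eq:selSolveUnbi}), which finishes the argument. The only point requiring genuine care --- and the step I would treat as the main obstacle --- is confirming that the regularity behind Theorem~\ref{thm:margUmpu} survives the conditioning: namely that $\Theta$ remains open inside the conditional natural parameter space (so that $\psi_A$ is smooth, differentiation of the power function under the integral sign is justified, and the completeness of $U$ for $\zeta$ that underlies the Neyman-structure construction still holds), and that $\cL(T\gv U=u,\,Y\in A)$ is non-degenerate. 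These follow from the positivity $\P_{\theta,\zeta}(A)>0$ on $\Theta$ and the finiteness of $\psi_A$ established above (convexity of the conditional parameter space together with openness of $\Theta$ in $\R^p$ forces $\Theta$ into its interior), but they are worth stating explicitly, since it is exactly this inherited regularity that licenses reusing the classical theorem in the selective setting.
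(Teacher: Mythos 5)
Your proposal is correct and follows exactly the paper's route: the paper proves the corollary by observing that $\cL(Y\gv A)$ is again an exponential family with the same natural parameters and sufficient statistics (only the carrier and normalizer change), and then applying Theorem~\ref{thm:margUmpu} verbatim to that conditional family, which is precisely your argument. Your added verification of the inherited regularity (finiteness of $\psi_A$, openness of $\Theta$ in the conditional natural parameter space, non-degeneracy of $\cL(T\gv U=u,\,Y\in A)$) is a detail the paper leaves implicit, but it does not change the substance of the argument.
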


We emphasize here that the test $\phi$ as defined above is not merely UMPU among selective tests that condition on $U$, but rather it is UMPU among {\em all} selective level-$\alpha$ tests; see \citet{lehmann2005testing} for more details. In some cases, it may be useful to interpret $\phi$ conditionally on $U=u$, for example if the observed $u$ leads to a more or less powerful test.

It is worth keeping in mind that unbiasedness is only one way to choose a test when there is no completely UMP one. For example, another simple choice is to use the equal-tailed test from the same conditional law \eqref{eq:efCond}. The equal-tailed level-$\alpha$ rejection region is simply the union of the one-sided level-$\alpha/2$ rejection regions. While the equal-tailed and UMPU tests choose $c_i$ and $\gamma_i$ in different ways, both tests take the form~(\ref{eq:testForm}). In fact, as we will see next, {\em all} admissible tests are of this form, which implies that data splitting tests are usually inadmissible.

\subsection{Conditioning, Admissibility, and Data Splitting}\label{sec:admissibility}

A selective level-$\alpha$ test $\phi$ is {\em inadmissible} on selection event $A$ if there exists another selective level-$\alpha$ test $\phi^*$ for which
\begin{equation}
  \E_{\theta,\zeta}[\phi^*(Y) | A] \geq \E_{\theta,\zeta}[\phi(Y) | A], \;\;\text{ for all } (\theta,\zeta)\in \Theta_1,
\end{equation}
with the inequality strict for at least one $(\theta,\zeta)$. In the main result of this section, we will show that tests based on data splitting are nearly always inadmissible.

Let $Y$ be an observation from Model~\ref{mod:partExFam}, and suppose we wish to test
 \begin{equation}\label{eq:hypEq}
  H_0:\;\theta=\theta_0 \quad \text{ against }
  \quad H_1:\; \theta\neq \theta_0.
\end{equation}
We will assume all tests are functions of the sufficient statistic and write (with some abuse of notation) $\phi(T, U)$ for $\phi(Y)$. We can do this without loss of generality because any test $\phi(Y)$ can be Rao-Blackwellized, i.e.,
\[ \phi(T, U) \equiv \E[\phi(Y) | T, U], \]
to obtain a new test that is a function of $(T, U)$, with the same power function as the original. Therefore, if $\phi(T, U)$ is inadmissible, then so is the original test $\phi(Y)$.

Now we can apply the following result of \citet{matthes1967tests}.

\begin{theorem}[Matthes and Truax, Theorem 3.1]
Let $Y$ be an observation from Model~\ref{mod:partExFam}, and suppose we wish to test
 \begin{equation}\label{eq:hypEq}
  H_0:\;\theta=\theta_0 \quad \text{ against }
  \quad H_1:\; \theta\neq \theta_0.
\end{equation}
 Let $\sC$ denote the class of all level-$\alpha$ tests $\phi(T, U)$ of the form
 \begin{equation}\label{eq:convexU}
   \phi(t, u) = \begin{cases}
       0 & t \in \text{int } C(u) \\
       \gamma(t, u) & t \in \partial C(u)\\
       1 & t \notin C(u)
     \end{cases},
 \end{equation}
and $C(u)$ is a convex set for every $u$. Then, for any $\phi \notin \sC$, there exists $\phi^* \in \sC$ such that
\begin{equation}
  \E_{\theta,\zeta}[\phi^*(T, U)] \geq \E_{\theta,\zeta}[\phi(T, U)], \;\;\text{ for all } (\theta,\zeta)\in \Theta_1.
 \label{eq:inadmissible}
\end{equation}
\label{thm:matthes}
\end{theorem}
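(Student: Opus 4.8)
The plan is to reduce the multiparameter problem to a family of one-parameter conditional problems and then invoke the classical complete-class theory for two-sided tests in a one-parameter exponential family. The key structural fact is that, conditional on $U=u$, the statistic $T$ follows the one-parameter exponential family $g_\theta(t \gv u)$ of~(\ref{eq:efCond}), which depends only on the parameter of interest $\theta$. Since the power of any test factors as $\E_{\theta,\zeta}[\phi(T,U)] = \E_{\theta,\zeta}\big[\beta_\phi(\theta \gv U)\big]$, where $\beta_\phi(\theta \gv u) = \E_\theta[\phi(T,U)\gv U=u]$ is the conditional power, it suffices to produce, for almost every $u$, a conditional test $\phi^*(\cdot,u)$ whose acceptance region is an interval, whose conditional size at $\theta_0$ matches that of $\phi(\cdot,u)$, and whose conditional power dominates $\beta_\phi(\theta \gv u)$ at every $\theta \neq \theta_0$. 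Matching the conditional size at $\theta_0$ keeps $\phi^*$ at level $\alpha$, and integrating the conditional domination over $u$ yields~(\ref{eq:inadmissible}).

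Next I would establish the conditional complete-class statement: for the one-parameter family $g_\theta(t \gv u)$, tests with interval acceptance regions form a complete class for $H_0:\theta=\theta_0$ against $H_1:\theta\neq\theta_0$. The workhorse is the generalized Neyman--Pearson lemma applied to a two-point alternative $\theta_1 < \theta_0 < \theta_2$. A test maximizing $\lambda_1\beta_\phi(\theta_1\gv u)+\lambda_2\beta_\phi(\theta_2\gv u)$ subject to $\beta_\phi(\theta_0\gv u)=\alpha$, with $\lambda_1,\lambda_2>0$, rejects on the set $\{t:\lambda_1 g_{\theta_1}(t\gv u)+\lambda_2 g_{\theta_2}(t\gv u) > k\, g_{\theta_0}(t\gv u)\}$. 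Dividing by $g_{\theta_0}$ and using the exponential-family form, this is $\{t:\lambda_1 e^{(\theta_1-\theta_0)t}+\lambda_2 e^{(\theta_2-\theta_0)t} > k'\}$. Because one exponent is negative and the other positive, the left-hand side is a strictly convex function of $t$, so the rejection set is the complement of an interval and the acceptance region $C(u)$ is convex, exactly as in~(\ref{eq:convexU}).

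To pass from these two-point Bayes tests to the full complete class, I would argue that every admissible conditional test arises as a limit of such tests, and that a pointwise limit of intervals is again an interval, with any boundary behavior absorbed into the randomization $\gamma(t,u)$. Equivalently, one constructs $\phi^*(\cdot,u)$ directly as the interval-acceptance test of conditional size $\alpha$ at $\theta_0$ and uses the monotone-likelihood-ratio / variation-diminishing structure of the exponential family to show that the sign changes of $\phi(\cdot,u)-\phi^*(\cdot,u)$ are arranged so that $\beta_{\phi^*}(\theta\gv u)\ge\beta_\phi(\theta\gv u)$ on both tails simultaneously. When $C(u)$ is already convex there is nothing to do; when it is not, this replacement strictly improves conditional power for some $\theta$, supplying the strict inequality needed for inadmissibility.

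The main obstacle is the conditional-to-marginal passage together with the two-sided domination. Producing a single interval test that beats $\phi$ at \emph{every} alternative $\theta\neq\theta_0$---rather than trading power between the lower and upper tails---is the crux, and it is precisely here that the convexity (variation-diminishing) property of the exponential family is essential; one must verify that matching only the size at $\theta_0$, without a separate unbiasedness derivative condition as in Theorem~\ref{thm:margUmpu}, already forces domination on both tails. The remaining technical points are choosing $C^*(u)$ measurably in $u$, handling the boundary randomization and ties on $\partial C(u)$, and justifying the limiting argument that preserves convexity; none of these is conceptually deep, but each requires care to make the complete-class conclusion fully rigorous.
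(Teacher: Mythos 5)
First, a point of comparison: the paper does not actually prove this statement; it quotes it as Theorem~3.1 of Matthes and Truax (1967) and uses it as a black box. So your attempt has to be judged against the classical argument it implicitly reconstructs. Your overall architecture is the right one---condition on $U=u$, work in the one-parameter exponential family~(\ref{eq:efCond}), produce a conditionally dominating interval-acceptance test, and integrate over $u$---and your observation that $\lambda_1 e^{(\theta_1-\theta_0)t}+\lambda_2 e^{(\theta_2-\theta_0)t}$ is strictly convex, so its super-level sets are complements of intervals, is exactly the convexity mechanism the real proof runs on. But the step you yourself flag as the crux is a genuine gap, and the verification you propose there would fail: matching \emph{only} the conditional size of $\phi(\cdot,u)$ at $\theta_0$ does not pin down $\phi^*$ and cannot force two-sided domination. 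The interval-acceptance tests of a given conditional size form a one-parameter family sweeping from one one-sided test to the other, and a generic member does not dominate $\phi$. Concretely, if $\cL_\theta(T \mid U=u)=N(\theta,1)$ and $\phi(\cdot,u)$ rejects exactly when $1<|t|<2$, then the one-sided test rejecting for $t>c$ with the same size has power $1-\Phi(c+5)\approx 10^{-8}$ at $\theta=-5$, while $\phi$ has power about $\Phi(4)-\Phi(3)\approx 10^{-3}$ there; no variation-diminishing argument can rescue a size-only construction, because the claimed conclusion is simply false for it.

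The missing idea is a second matching condition. Choose $\phi^*(\cdot,u)$ of the form~(\ref{eq:convexU}) so that both $\E_{\theta_0}[\phi^*\mid U=u]=\E_{\theta_0}[\phi\mid U=u]$ and $\E_{\theta_0}[T\phi^*\mid U=u]=\E_{\theta_0}[T\phi\mid U=u]$. The second condition matches the \emph{derivative} of the conditional power of $\phi$ at $\theta_0$; it is not an unbiasedness requirement ($\phi$ need not be unbiased, and $\phi^*$ must mimic whatever slope $\phi$ has), which is why it differs from the condition in Theorem~\ref{thm:margUmpu}. Existence of an interval test meeting both constraints follows from the same continuity/intermediate-value argument used to construct UMPU tests. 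With both constraints in hand, domination at every alternative follows in one stroke from the generalized Neyman--Pearson lemma together with your own secant-line observation: for any $\theta'\neq\theta_0$, let $k_1+k_2t$ be the affine function interpolating $e^{(\theta'-\theta_0)t}$ at the two acceptance endpoints; by strict convexity, $\bigl\{t:\, e^{(\theta'-\theta_0)t}>k_1+k_2t\bigr\}$ is precisely the rejection region of $\phi^*$, so $\phi^*$ maximizes power at $\theta'$ among all tests satisfying the two constraints---and $\phi$ is such a test. This simultaneous-domination argument also makes your limiting step (``every admissible test is a limit of two-point Bayes tests'') unnecessary, which is just as well, since as sketched it would require weak-$*$ compactness machinery you have not supplied. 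Two smaller points: your construction should match the conditional size of $\phi$, not the level $\alpha$ (the conditional size of a level-$\alpha$ test varies with $u$; your first paragraph has this right, your third does not), and the theorem as stated allows vector-valued $\theta$ with convex $C(u)\subset\R^k$, while your argument is intrinsically one-dimensional---adequate for the paper's application (where $k=1$) but short of the stated generality.
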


Notice that, if \eqref{eq:inadmissible} holds with equality for all $(\theta, \zeta)$, then by the completeness of $(T,U)$ we have $\phi \eqAS \phi^*$. Hence, every admissible test is in $\sC$ or almost surely equal to a test in $\sC$.

In order to apply this result to data splitting, we first introduce a generic exponential family composed of two independent data sets governed by the same parameters:
\begin{model}[Exponential Family with \capSampOrData Splitting]\label{mod:ss}
Model independent random variables $(Y_{1},Y_{2}) \in \cY_1 \times \cY_2$ as
\begin{equation}
  Y_{i} \sim \exp\left\{\theta T_{i}(y) + \zeta ' U_{i}(y) - \psi_{i}(\theta,\zeta)\right\}\; f_{0,i}(y),\quad i=1,2,
\end{equation}
with $\theta\in\R$ and with the models for $Y_{i}$ both satisfying Model~\ref{mod:partExFam}.
\end{model}
Model~\ref{mod:ss} would, for example, cover the case where $Y_1$ and $Y_2$ are the responses for two linear regressions with different design matrices but the same regression coefficients.

For a selection event ${A = A_1 \times \cY_2}$, we say $\phi$ is a
{\em data-splitting test} if $\phi(Y) = \phi_2(Y_2)$; that is, the selection stage uses only $Y_{1}$ and the inference stage uses only $Y_{2}$. Again, by Rao-Blackwellization, we can assume without loss of generality that the test is of the form $\phi(T_2, U_2)$.

Next, define the {\em cutoff gap} $g^*(\phi)$ as the largest $g\geq 0$ for which the acceptance and rejection regions are separated by a ``cushion'' of width $g$. If $T_2^*$ is a conditionally independent copy of $T_2$ given $U_2$, then
\begin{equation}\label{eq:gapEvent}
  g^*(\phi) = \sup \;\{g: \; \P_{\theta,\zeta}( | T_2 - T_2^*| < g,\; \phi(T_2,U_2)>0, \;\phi(T_2^*,U_2)<1) = 0\}.
\end{equation}
Note that the support of $(T_2,T_2^*,U_2)$ does not depend on $\theta$ or $\zeta$; thus, neither does $g^*$. For most tests, $g^*(\phi)=0$. For example, $g^*=0$ if either cutoff is in the interior of $\text{supp}(T_2\gv U_2)$ with positive probability, or if $\phi$ is a randomized test for discrete $(T_2,U_2)$.

Next we prove the main technical result of this section: $\phi$ is inadmissible unless $T_1$ is determined by $U_1$ on $A_1$, within an amount $g^*$ of variability.

\begin{theorem}\label{thm:ssInadm}
Let $T_1^*$ denote a copy of $T_1$ that is conditionally independent given $U_1$ and $Y_1\in A$, and let $\phi$ be a data-splitting test of~(\ref{eq:hypEq}) in Model~\ref{mod:ss}. If
\[
\P_{\theta,\zeta}(|T_1-T_1^*| > g^*(\phi) \gv Y_1\in A) > 0
\]
then $\phi$ is inadmissible.
\end{theorem}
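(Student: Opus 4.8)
The plan is to embed the problem in the combined exponential family for $(Y_1,Y_2)$ and then invoke the Matthes--Truax essential completeness theorem (Theorem~\ref{thm:matthes}). First I would observe that, by independence of $Y_1$ and $Y_2$ in Model~\ref{mod:ss} and the fact that conditioning an exponential family on any event preserves its natural parameters and sufficient statistics, the conditional law $\cL((Y_1,Y_2)\gv A)$ on $A = A_1\times\cY_2$ is again an exponential family, now with scalar sufficient statistic $T = T_1+T_2$ for $\theta$ and nuisance statistic $U = U_1+U_2$ for $\zeta$. Since $(T,U)$ is complete and sufficient for this conditional family, Rao--Blackwellizing the data-splitting test onto it,
\[
  \tilde\phi(T,U) \;=\; \E[\phi(T_2,U_2)\gv T,U,\,A],
\]
produces a test with the same selective power function (null included) as $\phi$. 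Hence $\phi$ is inadmissible as soon as $\tilde\phi$ is, and it suffices to work with $\tilde\phi$.

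Next I would apply Theorem~\ref{thm:matthes} to this conditional family. By the completeness remark following~\eqref{eq:inadmissible}, every admissible selective test must be almost surely equal to a member of the class $\sC$ of~\eqref{eq:convexU}, i.e. a test whose acceptance region in $t$ is a convex interval for each fixed $u$, and which is therefore pure (equal to $0$ or $1$) except on the boundary $\partial C(u)$, comprising at most two values of $t$ per $u$. Thus it is enough to show that the stated condition forces $\tilde\phi(t,u)$ strictly inside $(0,1)$ on a set of $(t,u)$ too large for any such two-point-per-$u$ boundary. The dominating test is then the $\sC$ member supplied by Theorem~\ref{thm:matthes}, and the improvement is strict because equality of selective power functions would, by completeness of $(T,U)$, force $\tilde\phi$ to agree almost surely with that $\sC$ member --- which the remaining step rules out.

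The heart of the argument is translating the spread condition on $T_1$ into fractionality of $\tilde\phi$, and the natural way to do so is to work first at a fixed pair $(U_1,U_2)=(u_1,u_2)$, a finer conditioning than the $U=u_1+u_2$ demanded by $\sC$. At fixed $(u_1,u_2)$ the hypothesis gives positive probability to $|T_1-T_1^*|>g^*(\phi)$, while the constraint $T_2=t-T_1$ transfers this spread to $T_2$; since $g^*$ in~\eqref{eq:gapEvent} is the exact cushion separating $\{\phi>0\}$ from $\{\phi<1\}$ at fixed $U_2=u_2$, a $T_2$-spread exceeding $g^*$ lets the conditional law of $T_2$ place positive mass on both the rejection and the acceptance set of $\phi$. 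Finally, because the conditional law of $T_2$ given $(T,U)=(t,u),A$ is a mixture over all $(u_1,u_2)$ with $u_1+u_2=u$, and straddling by any single component forces the mixture to charge both sets, we conclude $\tilde\phi(t,u)\in(0,1)$ over a range of combined values $t$ for a positive-probability collection of $u$ --- exceeding the at most two boundary values of $t$ per $u$ that a convex-acceptance test in $\sC$ can carry strictly between $0$ and $1$.

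I expect the main obstacle to be exactly this interplay between the two levels of conditioning: the gap $g^*$ and the spread hypothesis are both phrased at fixed $(U_1,U_2)$, whereas admissibility in the combined family is governed by the coarser statistic $U=U_1+U_2$. The mixture observation above is what bridges the two, but making it rigorous requires a careful Fubini argument on the product space $\cY_1\times\cY_2$ --- integrating the positive-probability $T_1$-spread event against the positive-probability near-gap rejection/acceptance event for $T_2$, both at matched $(u_1,u_2)$ --- to certify that the set of $(t,u)$ on which $\tilde\phi$ is strictly fractional has positive probability. Once that is established, $\tilde\phi$ cannot coincide almost surely with any member of $\sC$, so the Matthes--Truax dominating test strictly improves on it and $\phi$ is inadmissible.
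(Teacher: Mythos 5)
Your two reduction steps are sound and match the paper's: pass to the conditional exponential family on $A$, Rao--Blackwellize onto the complete sufficient statistic $(T,U)$, and invoke Theorem~\ref{thm:matthes} plus completeness to conclude that an admissible test must agree a.s.\ with a member of $\sC$. The gap is in your core step. The claim that a $T_2$-spread exceeding $g^*(\phi)$ forces the conditional law of $T_2$ given $(T,U)=(t,u)$ to charge both $\{\phi>0\}$ and $\{\phi<1\}$ is false: two support points of $T_2$ separated by more than $g^*$ can lie on the same side of the cushion (for extreme $t$ the whole conditional support sits in the rejection region, and $\tilde\phi(t,u)=1$ there, as it should). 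What~\eqref{eq:gapEvent} actually supplies, for each $\delta>g^*(\phi)$, is a positive-probability pair $(T_2,T_2^*)$ that \emph{already straddles} the two regions and lies \emph{within} $\delta$ of each other; it says nothing about arbitrary pairs that are far apart. In the paper's proof the straddling comes from~\eqref{eq:gapEvent}, and the $T_1$-spread plays a different role: it makes the four sums interleave, $T_1+T_2<T_1+T_2^*<T_1^*+T_2<T_1^*+T_2^*$, all at the same $U$, carrying $\phi$-values $>0$, $<1$, $>0$, $<1$, which contradicts the convex-section property~\eqref{eq:reqTU} directly.

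More fundamentally, fractionality of $\tilde\phi$ is the wrong target: it can fail outright while the theorem still holds. Take $U_1,U_2$ degenerate (no nuisance parameter), $T_1\in\{-10,0\}$ and $T_2\in\{-1,0\}$ two-point exponential families, $A_1=\cY_1$, and let $\phi$ reject exactly when $T_2=-1$. The only straddling pair is $(-1,0)$, at distance $1$, so $g^*(\phi)=1$, while $|T_1-T_1^*|=10>g^*(\phi)$ with positive probability, so the hypothesis of the theorem holds. But every value of $T\in\{-11,-10,-1,0\}$ has a unique decomposition as $T_1+T_2$, so $\tilde\phi=\E[\phi\gv T,A]$ equals $1,0,1,0$ at these four points: it is pure everywhere and fractional nowhere. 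Inadmissibility here is visible only through the non-convex (reject--accept--reject--accept) pattern of the acceptance set in $t$ --- exactly what the paper's interleaving argument detects and what your fractionality criterion cannot. Repairing your proof means replacing the fractionality step by that four-point interleaving argument applied to the values of $\phi$ itself, at which point you have reproduced the paper's proof.
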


\begin{proof}
  Construct conditionally independent copies $T_i^*$ with $(T_1,T_1^*,U_1) \Perp (T_2,T_2^*,U_2)$, and assume that $\phi$ is of the form $\phi(T, U)$ with $T=T_1+T_2$ and $U=U_1+U_2$ (otherwise we could Rao-Blackwellize it). If $\phi$ is admissible, then by \citet{matthes1967tests}, it must be a.s. equivalent to a test of the form~(\ref{eq:convexU}). That is, there exist $c_i(U)$ for which
  \begin{equation}\label{eq:reqTU}
  \P_{\theta,\zeta}(\phi(T,U) < 1, \; T \notin [c_1(U),c_2(U)] \gv A) =
  \P_{\theta,\zeta}(\phi(T,U) > 0, \; c_1(U) < T < c_2(U)\gv A) = 0.
  \end{equation}

Now, by assumption, there exists $\delta > g^*(\phi)$ for which
\[ B_1 \triangleq \{ |T_1 - T_1^*| > \delta \} \]
occurs with positive probability. By the definition of $g^*(\phi)$ in \eqref{eq:gapEvent}, the event
\[ B_2 \triangleq \{ |T_2 - T_2^*| > \delta,\; \phi(T_2, U_2) > 0,\; \phi(T_2^*, U_2) < 1 \} \]
also occurs with positive probability. Since the two events are independent, $B = B_1 \cap B_2$ occurs with positive probability.

  Next, assume w.l.o.g. that the event in~(\ref{eq:gapEvent}) can occur with $T_2^*>T_2$ (otherwise we could reparameterize with natural parameter $\xi=-\theta$, for which $-T_i$ would be the sufficient statistics for $Y_i$). Then for some $\delta>g^*(\phi)$, the event
  \[
  B = \left\{T_1 + \delta < T_1^*,\;\; T_2 < T_2^* < T_2 + \delta,\;\; \phi(T_2,U_2) > 0,\; \text{ and } \phi(T_2^*,U_2) < 1\right\}
  \]
  occurs with positive probability for all $\theta,\zeta$. On $B$,
  \[
  T_1 + T_2 < T_1 + T_2^* < T_1^* + T_2 < T_1 + T_2^*,
  \]
  but $\phi(T,U) > 0$ for $T=T_1+T_2$ and $T=T_1^*+T_2$ and $\phi(T,U)<1$ for the other two, ruling out the possibility of~(\ref{eq:reqTU}).
\end{proof}

In the typical case $g^*=0$ and we have
\begin{corollary}
  Suppose $\phi$ is a data-splitting test of~(\ref{eq:hypEq}) in Model~\ref{mod:ss} with $g^*(\phi) = 0$. Then $\phi$ is inadmissible unless $T_1$ is a function of $U_1$ on $A$.
\end{corollary}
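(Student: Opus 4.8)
The plan is to obtain the corollary as a direct specialization of Theorem~\ref{thm:ssInadm}, followed by translating the resulting probabilistic condition into the measurability statement $T_1\in\msF(U_1)$. First I would substitute $g^*(\phi)=0$ into the hypothesis of Theorem~\ref{thm:ssInadm}, so that the sufficient condition for inadmissibility becomes
\[
\P_{\theta,\zeta}\!\left(|T_1-T_1^*|>0 \gv Y_1\in A\right)
 \;=\; \P_{\theta,\zeta}\!\left(T_1\neq T_1^* \gv Y_1\in A\right)\;>\;0,
\]
where $T_1^*$ is a conditionally independent copy of $T_1$ given $U_1$ and $Y_1\in A$. Taking the contrapositive, if $\phi$ is admissible then necessarily $\P_{\theta,\zeta}(T_1=T_1^*\gv Y_1\in A)=1$; recall from the discussion following~(\ref{eq:gapEvent}) that the support of $(T_1,T_1^*,U_1)$ does not depend on $(\theta,\zeta)$, so this statement is parameter-free.

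The second step is the elementary ``collision probability'' characterization: two conditionally independent copies of a random variable agree almost surely if and only if their common conditional law is degenerate. Conditioning additionally on $U_1$ and writing $\nu_u$ for the conditional law of $T_1$ given $U_1=u$ and $Y_1\in A$, I would note that
\[
\P_{\theta,\zeta}\!\left(T_1=T_1^* \gv U_1=u,\,Y_1\in A\right)\;=\;\sum_{t}\nu_u(\{t\})^2,
\]
where the sum runs over the atoms of $\nu_u$. By Cauchy--Schwarz this quantity equals $1$ exactly when $\nu_u$ is a point mass. Hence $\P(T_1=T_1^*\gv Y_1\in A)=1$ forces $\nu_u$ to be degenerate for (conditionally) almost every $u$, which is precisely the statement that $T_1$ is almost surely a measurable function of $U_1$; that is, $T_1\in\msF(U_1)$. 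Combining the two steps yields the claim.

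I expect the only genuine subtlety to be the measure-theoretic bookkeeping rather than any new idea. One must handle the general (not merely discrete) case by decomposing $\nu_u$ into its atomic and continuous parts and observing that the continuous part contributes nothing to $\P(T_1=T_1^*)$, so that the collision identity above holds verbatim. One should also be careful that the conclusion is really ``$T_1$ is a function of $U_1$ on $A_1$,'' i.e.\ with respect to the conditional law given $Y_1\in A$; since $A=A_1\times\cY_2$ depends only on $Y_1$, this conditioning is harmless and the independence structure built into Model~\ref{mod:ss} is all that is used. No further appeal to the second sample is needed, so the argument is genuinely just Theorem~\ref{thm:ssInadm} plus this standard lemma.
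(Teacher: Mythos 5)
Your proposal is correct and matches the paper's intent exactly: the corollary is stated there as an immediate specialization of Theorem~\ref{thm:ssInadm} to $g^*(\phi)=0$, with the contrapositive reading that admissibility forces $\P(T_1=T_1^*\gv Y_1\in A)=1$ and hence degeneracy of the conditional law of $T_1$ given $U_1$ on the selection event. Your collision-probability lemma (atoms-only contribution plus the Cauchy--Schwarz equality case) is precisely the standard fact the paper leaves implicit, and your caveat that the conclusion holds almost surely under the conditional law given $Y_1\in A$ is the right reading of ``$T_1\in\msF(U_1)$'' here.
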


\begin{example}\label{ex:ssBad}
To illustrate Theorem~\ref{thm:ssInadm}, consider a bivariate version of Example~\ref{ex:univar}:
\begin{equation}
  Y_{i} \sim N(\mu,1),\;\; i=1,2, \;\;\text{ with } Y_{1} \Perp Y_{2},
\end{equation}
in which we condition on the selection event $A=\{Y_{1} > 3\}$.

With \sampOrData splitting, we could construct a 95\% confidence interval using only $Y_{2}$; namely, ${Y_{2} \pm 1.96}$. This interval is valid but does not use all the information available.
A more powerful alternative is to construct an interval based on the law
\begin{equation}
  \cL_{\mu}\left( Y_{1} + Y_{2} \Gv Y_{1} > 3\right),
\end{equation}
which uses the leftover information in $Y_{1}$.

Figure~\ref{fig:ssBad:info} shows the Fisher information that is available to each test as a function of $\mu$. The Fisher information of \sampOrData splitting is exactly 1 no matter what $\mu$ is, whereas the optimal selective test has information approaching 2 as $\mu$ increases. Figure~\ref{fig:ssBad:ci} shows the expected confidence interval length of the equal tailed interval as a function of $\mu$. For $\mu\gg 3$, the \sampOrData splitting interval is roughly 41\% longer than it needs to be (in the limit, the factor is $\sqrt{2}-1$).

Together, the plots tell a consistent story: when the selection event is not too unlikely, discarding the first data set exacts an unnecessary toll on the power of our second-stage procedure.
\end{example}

\begin{figure}
  \centering
  \begin{subfigure}[t]{.4\textwidth}
    % source code: code/misc_plots/fisherInfo.r
    \includegraphics[width=\textwidth]{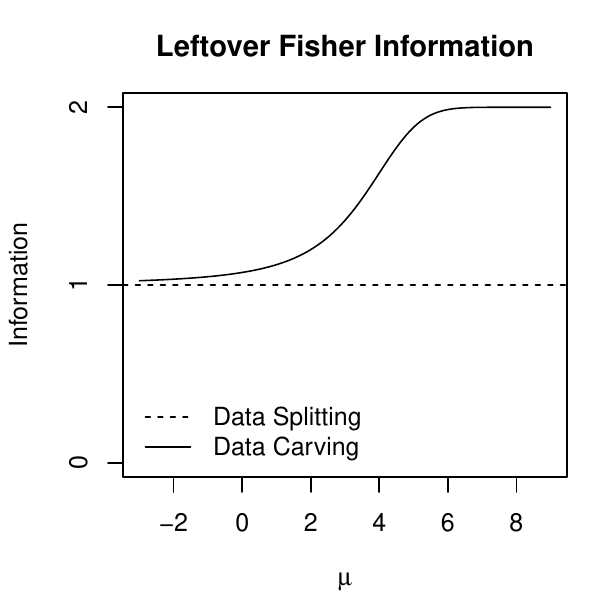}
    \caption{Fisher information available for second-stage inference.}
    \label{fig:ssBad:info}
  \end{subfigure}
  \hspace{.1\textwidth}
  \begin{subfigure}[t]{.4\textwidth}
    % source code: code/twodim_example/interval.py
    \includegraphics[width=\textwidth]{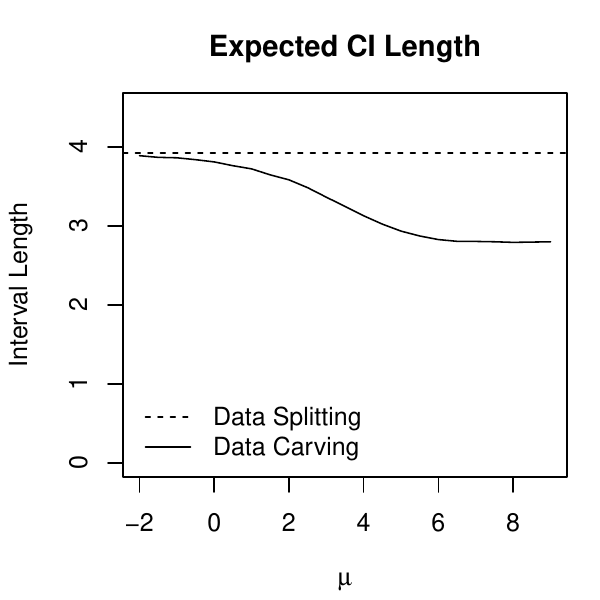}
    \caption{Expected confidence interval length.}
    \label{fig:ssBad:ci}
  \end{subfigure}
  \caption{Contrast between \sampOrData splitting and \sampOrData carving in Example~\ref{ex:ssBad}, in which $Y_{i}\sim N(\mu,1)$ independently for $i=1,2$. \capSampOrData splitting
discards $Y_{1}$ entirely, while \sampOrData carving uses the leftover information in $Y_{1}$ for the second-stage inference. When $\mu\ll 3$, \sampOrData carving also uses about one data point for inference since there is no information left over in $Y_{1}$. But when $\mu\gg 3$, conditioning barely effects the law of $Y_{1}$ and \sampOrData carving has nearly two data points left over.}
  \label{fig:ssBad}
\end{figure}

\section{Selective Inference for Linear Regression}\label{sec:linReg}

For a concrete example of the exponential family framework discussed in Section~\ref{sec:exfam}, we now turn to linear regression, which is one of the most important applications of selective inference. In linear regression, the data arise from a multivariate normal distribution
\begin{equation}\label{eq:satModel}
  Y \sim N_n(\mu, \;\sigma^2 I_n),
\end{equation}
where $\mu$ is modeled as
\begin{equation}\label{eq:subModel}
  \mu = \bX_M \beta^{M}.
\end{equation}
To avoid trivialities, we will assume that $\bX_M$ has full column rank for all $M$ under consideration, so that $\beta^{M}$ is well-defined. 

Depending on whether $\sigma^2$ is assumed known or unknown, hypothesis tests for coordinates $\beta_j^{M}$ generalize either the $z$-test or the $t$-test. In the non-selective case, $z$- and $t$-tests are based on coordinates of the ordinary least squares (OLS) estimator $\hat\beta = \bX_M^\dagger Y$, where  $\bX_M^\dagger$ is the Moore-Penrose pseudoinverse. For a particular $j$ and $M$, it will be convenient to write $\hat\beta_j^M = {\eta_j^M}'Y$ with
\begin{equation}
  {\eta_j^M}= \frac{X_{j\cdot M}}{\|X_{j\cdot M}\|^2}, \quad \text{ where } X_{j\cdot M} = \proj_{\bX_{M\setminus j}}^\perp X_j
\end{equation}
is the remainder after adjusting $X_j$ for the other columns of $X_M$, and $\proj_{\bX_{M\setminus j}}$ denotes projection onto the column space of $\bX_{M\setminus j}$. Letting $\hat\sigma^2=\|\proj_{\bX_M}^\perp Y\|^2/(n-|M|)$, the test statistics
\begin{equation}\label{eq:testStats}
  Z = \frac{{\eta_j^M}'Y}{\sigma \|{\eta_j^M}\|} \quad \text{ and } \quad \widetilde T = \frac{{\eta_j^M}'Y}{\hat\sigma \|{\eta_j^M}\|}
\end{equation}
are respectively distributed as $N(0,1)$ and $t_{n-|M|}$ under $H_0:\, \beta_j^M = 0$. Henceforth, we will suppress the subscript and superscript for $\eta_j^M$, simply writing $\eta$ when there is no ambiguity. The optimal selective $t$- and $z$-tests are based on the same test statistics, but compared against different null distributions.

We consider two distinct modeling frameworks: Section~\ref{sec:reducedModel} concerns inference under the more restrictive {\em selected linear model}, the family of distributions for which~\eqref{eq:subModel} and~\eqref{eq:satModel} both hold, while Section~\ref{sec:fullModel} concerns inference under the more general {\em saturated model} which assumes only~\eqref{eq:satModel} and performs inference on ${\eta_j^M}'\mu$. As we will see, selected-model tests can be more powerful than saturated-model tests, but the extra power comes at a price since the inferences are only valid under more restrictive modeling assumptions. Section~\ref{sec:fullVLinear} compares and contrasts the two approaches.

\subsection{Inference Under the Selected Model}\label{sec:reducedModel}

Suppressing the superscript $M$ in $\beta^{M}$, the selected model has the form
\begin{equation}
  Y \sim \exp\left\{\frac{1}{\sigma^2}\beta'{\bX_M}'y - \frac{1}{2\sigma^2}\|y\|^2 - \psi(\bX_M\beta,\sigma^2)\right\}
\end{equation}
If $\sigma^2$ is known, the sufficient statistics are $X_k'Y$ for $k\in M$, and inference for $\beta_j$ is based on
\begin{equation}
  \cL_{\beta_j}\left(X_j'Y \Gv {\bX_{M\setminus j}}'Y, \; A\right).
\end{equation}
Otherwise, $\|Y\|^2$ represents another sufficient statistic and inference is based on
\begin{equation}
  \cL_{\beta_j/\sigma^2}\left(X_j'Y \Gv {\bX_{M\setminus j}}'Y, \; \|Y\|, \; A\right).
\end{equation}
Decomposing
\begin{align}
  X_j'Y
  &= X_j'\proj_{\bX_{M\setminus j}} Y
  + X_j'\proj_{\bX_{M\setminus j}}^\perp Y \\
  &= X_j'\proj_{\bX_{M\setminus j}} Y + \|X_{j.M}\|^2 \eta'Y,
\end{align}
we see that $Z=\eta'Y / \sigma \|\eta\|$ is a fixed affine transformation of $X_j'Y$ once we condition on $X_{M\setminus j}'Y$. If $\sigma^2$ is known, then, we can equivalently base our selective test on
\begin{equation}
  \cL_{\beta_j}\left(Z \Gv {\bX_{M\setminus j}}'Y, \; A\right).
\end{equation}
While $Z$ is marginally independent of $X_{M\setminus j}'Y$, it is generically not conditionally independent given $A$, so that the null distribution of $Z$ generically depends on $X_{M\setminus j}'Y$.

If $\sigma^2$ is unknown, we may observe further that
\begin{equation}\label{eq:sigmaSq}
\hat \sigma^2 = \frac{\|\proj_{\bX_M}^\perp Y\|^2}{n-|M|}
= \frac{\|Y\|^2 - \|\proj_{\bX_{M\setminus j}} Y\|^2 - (\eta'Y)^2/\|\eta\|^2}{n-|M|}.
\end{equation}
Writing $Z_0(Y) = \eta'Y / \|\eta\|$, we have
$\widetilde T(Y) = (n-|M|) \,Z_0/(\|Y\|^2 - \|\proj_{X_{M\setminus j}} Y\|^2 - Z_0^2)$, which is a monotone function of $\eta'Y$ after fixing $\|Y\|^2$ and $X_{M\setminus j}'Y$. Thus, our test is based on the appropriate conditional law of
\begin{equation}
  \cL_{\beta_j/\sigma^2}\left(\widetilde T \Gv {\bX_{M\setminus j}}'Y, \; \|Y\|, \; A\right).
\end{equation}
Note that, given $A$, $\hat\sigma^2$ in~(\ref{eq:sigmaSq}) is neither unbiased for $\sigma^2$ nor $\chi^2$-distributed. We recommend against viewing it as a serious estimate of $\sigma^2$ in the selective setting.

Constructing a selective $t$-interval is not as straightforward as the general case described in Section~\ref{sec:monte} because $\beta_j$ is not a natural parameter of the selected model; rather, $\beta_j/\sigma^2$ is. Testing $\beta_j=0$ is equivalent to testing $\beta_j/\sigma^2=0$, but testing $\beta_j=b$ for $b\neq 0$ does not correspond to any point null hypothesis about $\beta_j/\sigma^2$. However, we can define
\begin{equation}
  \widetilde Y = Y-b X_j\sim N(\bX \beta - bX_j, \sigma^2I).
\end{equation}
Because $(\beta_j-b)/\sigma^2$ is a natural parameter for $\widetilde Y$, we can carry out a UMPU selective $t$-test for $H_0:\beta_j=b \iff (\beta_j-b)/\sigma^2=0$ based on the law of $\widetilde Y$.

\subsection{Inference Under the Saturated Model}\label{sec:fullModel}

Even if we do not take the linear model~(\ref{eq:subModel}) seriously, there is still a well-defined best linear predictor in the population for design matrix $\bX_M$:
\begin{equation}
  \theta^{M}=
  \argmin_\theta \E_\mu\left[ \|Y - \bX_M\theta\|^2\right]
  = \bX_M^\dagger \mu,
\end{equation}
We call $\theta^{M}$ the {\em least squares coefficients} for $M$. According to this point of view, each $\theta_j^{M}$ corresponds to the linear functional ${\eta_j^M}'\mu$.

This point of view is convenient because the least-squares parameters are well-defined under the more general saturated model~(\ref{eq:fullModel}), leading to meaningful inference even if we do a poor job of selecting predictors. In particular, \citet{berk2013valid} adopt this perspective as a way of avoiding the need to consider multiple candidate statistical models.

Several recent articles have tackled the problem of exact selective inference in linear regression after specific selection procedures~\citep{lee2016exact,loftus2014significance,lee2014marginal}. These works, as well as \citet{berk2013valid}, assume the error variance is known, or that an estimate may be obtained from independent data, and target least-squares parameters in the saturated model.

Under the selected model, $\beta_j^{M} = \theta_j^{M} = \eta'\mu$, whereas under the saturated model $\beta^{M}$ may not exist (i.e., there is no $\beta^M$ such that $\mu = \bX_M\beta^M$). Compared to the selected model, the saturated model has $n-|M|$ additional nuisance parameters corresponding to $\proj_{\bX_M}^\perp \mu$.

We can write the saturated model in exponential family form as
\begin{equation}\label{eq:fullModEF}
  Y \sim \exp\left\{ \frac{1}{\sigma^2}\mu ' y - \frac{1}{2\sigma^2} \|y\|^2 - \psi(\mu,\sigma^2)\right\},
\end{equation}
which has $n+1$ natural paramaters if $\sigma^2$ is unknown and $n$ otherwise. To perform inference on some least-squares coefficient $\theta_j^{M}=\eta'\mu$, we can rewrite~(\ref{eq:fullModEF}) as
\begin{equation}
  Y \sim \exp\left\{ \frac{1}{\sigma^2\|\eta\|^2} \mu'\eta\; \eta'y +
    \frac{1}{\sigma^2} (\proj_{\eta}^\perp\mu)'\; (\proj_\eta^\perp y)
    - \frac{1}{2\sigma^2}\|y\|^2 - \psi(\mu,\sigma^2)\right\}.
\end{equation}

If $\sigma^2$ is known, inference for $\theta_j^{M}$ after selection event $A$ is based on the conditional law $\cL_{\theta_j^{M}}\left( \eta'Y \Gv \proj_\eta^\perp Y, \; A\right)$, or equivalently $\cL_{\theta_j^{M}}\left( Z \Gv \proj_\eta^\perp Y, \; A\right)$.

If $\sigma^2$ is unknown, we must instead base inference on
\begin{equation}\label{eq:lstFullUnk}
  \cL_{\theta_j^{M}/\sigma^2}\left(\eta'Y \Gv \proj_\eta^\perp Y, \; \|Y\|, \; A\right).
\end{equation}

Unfortunately, the conditioning in~(\ref{eq:lstFullUnk}) is too restrictive. The set
\begin{equation}
  \left\{y:\; \proj_\eta^\perp y = \proj_\eta^\perp Y,\;\; \|y\| = \|Y\| \right\}
\end{equation}
is a line intersected with the sphere $\|Y\|S^{n-1}$, and consists only of the two points $\{Y, \; Y - 2\eta'Y\}$, which are equally likely under the hypothesis $\theta_j^{M}=0$. Thus, under the saturated model, conditioning on $\|Y\|$ leaves insufficient information about $\theta_j^{M}$ to carry out a meaningful test.

\subsection{Saturated Model or Selected Model?}\label{sec:fullVLinear}

When $\sigma^2$ is known, we have a choice whether to carry out the $z$-test with test statistic $Z = \eta'Y/\sigma \|\eta\|$ in the saturated or the selected model. In other words, we must choose either to assume that $\proj_{\bX_M}^\perp \mu = 0$ or to treat it as an unknown nuisance parameter. Writing
\begin{equation}
  U = {\bX_{M\setminus j}}' Y, \quad \text{ and } \quad V = \proj_{\bX_M}^\perp Y,
\end{equation}
we must choose whether to condition on $U$ and $V$ (saturated model) or only $U$ (selected model). Conditioning on both $U$ and $V$ can never increase our power relative to conditioning only on $U$, and (unless the tests coincide) will lead to an inadmissible test per Theorem~\ref{thm:ssInadm}.

In the non-selective case, this choice makes no difference at all since $T, U,$ and $V$ are mutually independent.
In the selective case, however, the choice may be of major consequence as it can lead to very different tests. In general, $T, U,$ and $V$ are not conditionally independent given $A$, and $\proj_{\bX_M}^\perp \mu$ may play an important role in determining the conditional distribution of $T$. If we needlessly condition on $V$, we may lose a great deal of power, whereas failing to condition on $V$ could lead us astray if $\proj_{\bX_M}^\perp \mu$ is large. A simple example can elucidate this contrast.

\begin{example}\label{ex:bivar}
  Suppose that $y\sim N_2(\mu, I_2)$, with design matrix $\bX = I_2$, and we choose the best-fitting one-sparse model. That is, we choose $M=\{1\}$ if $|Y_1|>|Y_2|$, and $M=\{2\}$ otherwise.

Figure~\ref{fig:fullvred} shows one realization of this process with $Y=(2.9,2.5)$. $|Y_1|$ is a little larger than $|Y_2|$, so we choose $M=\{1\}$. The yellow highlighted region $A=\{|Y_1|>|Y_2|\}$ is the chosen selection event, and the selected model is
\begin{equation}
  Y \sim N_2\left((\mu_1,0), I_2\right).
\end{equation}
In this case, $T=Y_1$, $V=Y_2$, and there is no $U$ since $\bX_M$ has only one column. The selected-model test is based on $\cL(Y_1 \gv A)$, whereas the saturated-model test is based on $\cL(Y_1 \gv Y_2, A)$. The second conditioning set, a union of two rays, is plotted in brown. Under the hypothesis $\mu=0$, the realized $|Y_1|$ is quite large given $A$, giving $p$-value 0.007. By contrast, $|Y_1|$ is not terribly large given $\{Y_2=2.5\}\cap A = \{Y_2=2.5, |Y_1|>2.5\}$, leading to $p$-value 0.30.
\end{example}

The selected-model approach is especially well-suited for testing goodness of fit of a selected linear model --- in that case, we prefer the test {\em not} to have level $\alpha$, but rather to reject with high probability, when important variables are not selected. \citet{fithian2015adaptive} consider sequential goodness-of-fit testing in a ``path'' of increasingly complex models selected by a method like the lasso or forward stepwise regression. As Example~\ref{ex:bivar} illustrates, the saturated-model $p$-value is especially large for ``near ties'' when $|Y_1|$ is not much larger than $|Y_2|$. As a result, the selected-model test can be much more powerful in early steps of the path where multiple strong variables compete to enter the model first. For more details see \citet{fithian2015adaptive}.

\begin{figure}
  \centering
  \begin{subfigure}[t]{.4\textwidth}
    % source code: code/misc_plots/twodim_regression.r
    \includegraphics[width=\textwidth]{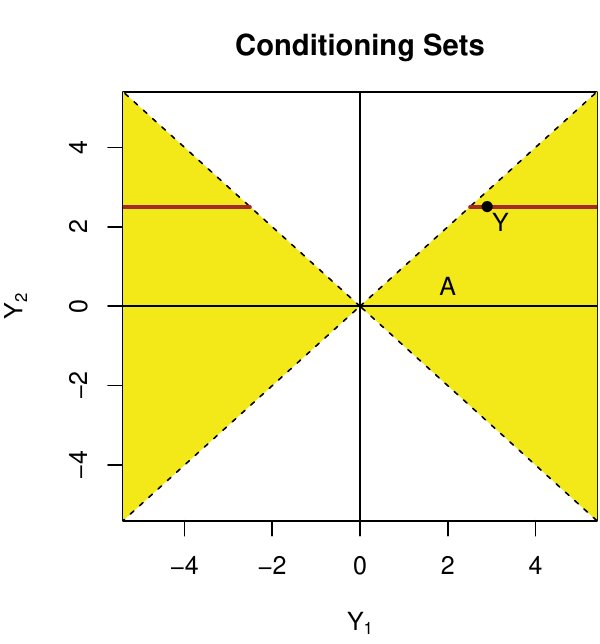}
    \caption{For $Y=(2.9,2.5)$, the selected-model conditioning set is
      $A=\{y:\;|y_1|>|y_2|\}$, a union of quadrants,
      plotted in yellow. The saturated-model conditioning set
      is ${\{y:\; y_2=2.5\}\cap A} = {\{y:\;y_2=2.5, |y_1|>2.5\}}$,
      a union of rays, plotted in brown.}
    \label{fig:fullvredSets}
  \end{subfigure}
  \hspace{.1\textwidth}
  \begin{subfigure}[t]{.4\textwidth}
    % source code: code/misc_plots/twodim_regression.r
    \includegraphics[width=\textwidth]{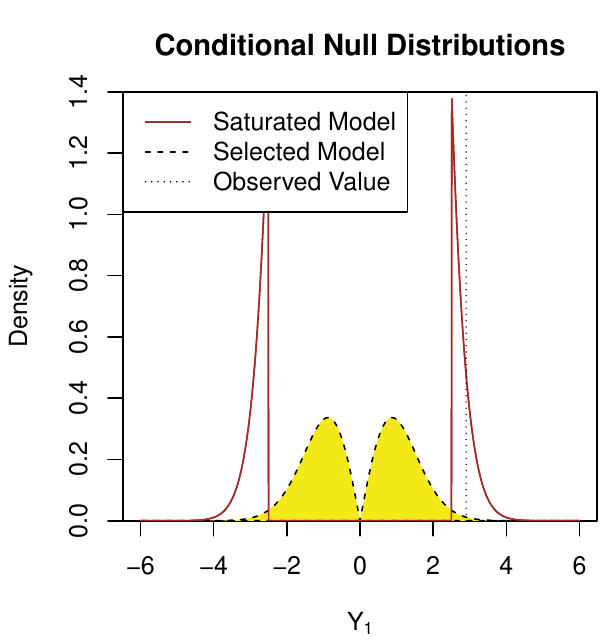}
    \caption{ Conditional distributions of $Y_1$ under
      $H_0:\mu_1 = 0$. Under the hypothesis
      $\mu=0$, the realized  $|Y_1|$ is  quite large given $A$,
      giving  $p$-value 0.007. By contrast, $|Y_1|$ is not too large
      given $A \cap \{y:\; y_2=Y_2\}$, giving
      $p$-value 0.3.}
  \end{subfigure}
  \caption{Contrast between the saturated-model and selected-model tests
    in Example~\ref{ex:bivar}, in which we fit a one-sparse model with
    design matrix $\bX=I_2$. The selected-model test is based
    on  $\cL_0(Y_1 \gv A)$, whereas the saturated-model test is based
    on $\cL_0(Y_1  \gv  Y_2, A)$.}
  \label{fig:fullvred}
\end{figure}

\section{Computations}\label{sec:approxInt}

We saw in Section~\ref{sec:exfam} that inference in the one-parameter exponential family requires knowing the conditional law $\cL_\theta(T\gv U, A)$. In a few cases, such as in the saturated model viewpoint, this conditional law can be determined fairly explicitly. In other cases, we will need to resort to Monte Carlo sampling. In this section, we suggest some general strategies.

\subsection{Gaussians Under the Saturated Model}\label{sec:classifying}

As we discussed in Section~\ref{sec:fullModel}, the previous papers by \citet{lee2016exact,loftus2014significance,lee2014marginal} adopted the saturated model viewpoint with known $\sigma^2$. In this case, $\cL_\theta(T\gv U, A) = \cL_\theta\left( \eta'Y \Gv \proj_\eta^\perp Y, \; A\right)$ is a truncated univariate Gaussian, since $\eta' Y$ is a Gaussian random variable and $\proj_\eta^\perp Y$ is independent of $\eta' Y$. If $A$ is convex, then the truncation is to an interval $[\cV^-(Y), \cV^+(Y)]$, where the endpoints represent the maximal extent one can move in the $\eta$ direction at a ``height'' of $\proj_\eta^\perp Y$, while still remaining inside $A$, i.e.,
\begin{align}
  \cV^{+}(Y) &= \sup_{\{t:\,Y + t\eta \,\in\, A\}} \eta'(Y+t\eta)\\[5pt]
  \cV^{-}(Y) &= \inf_{\{t:\,Y + t\eta \,\in\, A\}} \eta'(Y+t\eta).
\end{align}
The geometric intuition is illustrated in Figure~\ref{fig:blob}.

When $A$ is specifically a polytope, we can obtain closed-form expressions for $\cV^-$ and $\cV^+$. The generalization to regions $A$ that are non-convex is straightforward (i.e., instead of truncating to a single interval, we truncate to a union of intervals). For further discussion of these points, see \citet{lee2016exact}.

 \begin{figure}
   \centering
 \def\svgwidth{0.4\textwidth}
\input{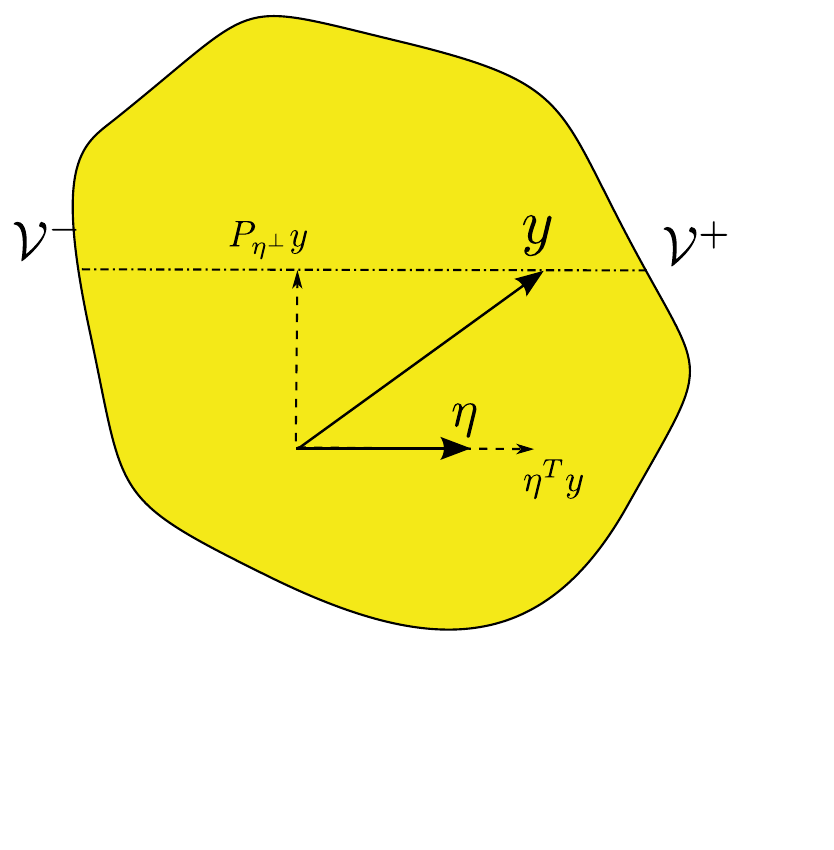_tex_edit}
   \caption{Saturated-model inference for a generic convex selection set for ${Y\sim N(\mu, I_n)}$. After conditioning on the yellow set $A$, $\cV^+$ is the largest $\eta'Y$ can get while $\cV^-$ is the smallest it can get. Under $H_0:\;\eta'\mu = 0$, the test statistic $\eta'Y$ takes on the distribution of a standard Gaussian random variable truncated to the interval $[\cV^-, \cV^+]$. As a result, $W(Y) = \frac{ \Phi(\eta'Y) - \Phi(\cV^-)}{ \Phi(\cV^+) - \Phi(\cV^-)}$ is uniformly distributed.}
   \label{fig:blob}
 \end{figure}

\subsection{Monte Carlo Tests and Intervals}\label{sec:monte}

In a more generic setting, we may not have an easy formula for conditional law of $T$. In that case, there are several options for inference using Monte Carlo methods.

If we can obtain a stream of samples from $\cL_\theta(T\gv U, A)$ for any value of $\theta$, then we can carry out hypothesis tests and construct intervals. This can be done efficiently via rejection sampling if, for example, we can sample efficiently from $\cL_\theta(Y\gv U)$ and $\P_\theta(Y \in A \gv U)$ is not too small. Otherwise, more specialized sampling approaches may be required. A little more abstractly, we now consider constructing a test based on the statistic $Z$, which is distributed according to a one-parameter exponential family
\begin{equation}
  Z \sim g_\theta(z) = e^{\theta z - \psi(\theta)}\,g_0(z).
\end{equation}

\paragraph{Exact Monte Carlo Tests}
Suppose that, in addition to $Z$, we are given an independent sequence
from the reference distribution
\begin{equation}
  Z_1, \ldots, Z_n \simiid g_0(z).
\end{equation}
Then an exact Monte Carlo one-sided test of
$H_0:\,\theta \leq 0$ rejects if the observed value $Z$ is
among the $(n+1)\,\alpha$ largest of $Z,Z_1,\ldots,Z_n$ \citep{barnard1963discussion}.

Even if i.i.d. samples are not available, the same procedure has level $\alpha$ provided the law of $(Z,Z_1,\ldots,Z_n)$ is {\em exchangeable} under $H_0$. \citet{besag1989generalized} propose an ingenious procedure for obtaining such an exchangeable sequence when we only know how to run a Markov chain with stationary distribution $g_0$: Beginning at $Z$, take $k\geq 1$ steps backward in the chain to $\widetilde Z$. Then, run $n$ independent chains $k$ steps forward, beginning each chain at $\widetilde Z$, and letting $Z_i$ denote the end state of the $i$th chain. If $Z\sim g_0$, the sequence is exchangeable. 

Note that, while this test has level $\alpha$ for {\em any} $k, n \geq 0$, using small values of $k,n$ makes the test more random, reducing its power. If the chain is irreducible, then as $k,n\to \infty$, the test converges to the deterministic right-tailed level-$\alpha$ test of $H_0$.

\paragraph{Approximate Monte Carlo Intervals}

By reweighting the samples, we can use $(Z_1,\ldots,Z_n)$ to test $H_0:\, \theta\leq \theta_0$ for any other $\theta_0$. Denote the importance-weighted empirical expectation as
\begin{align}
  \widehat \E_\theta \,h(Z) &=
  \frac{\sum_{i=1}^n h(Z_i) e^{\theta Z_i}}
  {\sum_{i=1}^n e^{\theta Z_i}}\\
  &\toAS \E_\theta \,h(Z) \quad \text{ as $n\to\infty$ for integrable } h.
\end{align}
In effect, we have put an exponential family ``through'' the empirical distribution of the $Z_i$ in the manner of \citet{efron1996using}; see also \citet{besag2001markov}. The Monte Carlo one-sided cutoff for a test of $H_0:\, \theta\leq \theta_0$ is the smallest $c_2$ for which $\widehat \P_{\theta_0} (Z> c_2) \leq \alpha$.
The test rejects for $Z>c_2$ and randomizes appropriately at $Z=c_2$.

The two-sided test of $H_0:\,
\theta=\theta_0$ is a bit more involved, but similar in principle. We can solve for
$c_1,\gamma_1,c_2,\gamma_2$ for which
\begin{align}\label{eq:levelAlpha}
  \widehat \E_{\theta_0} \phi(Z) &= \alpha\\\label{eq:uncorr}
  \widehat \E_{\theta_0} \left[Z\phi(Z) \right]
  &= \alpha\,\widehat\E_{\theta_0} Z.
\end{align}
In Appendix~\ref{sec:mcUMPU} we discuss how
(\ref{eq:levelAlpha}--\ref{eq:uncorr}) can be solved
efficiently for fixed $\theta_0$ and inverted to obtain a confidence
interval. Monte Carlo inference as described above is computationally
straightforward once $Z_1,\ldots,Z_n$ are obtained.

More generally, the $Z_i$ could represent importance samples with weights $W_i$, or steps in a Markov chain with stationary distribution
$g_0(z)$. The same methods apply as long as we still have
\begin{align}
  \widehat \E_\theta\, h(Z) &=
  \frac{\sum_{i=1}^n W_i\, h(Z_i)\, e^{\theta Z_i}}
  {\sum_{i=1}^n W_i\, e^{\theta Z_i}}\\
  &\toAS \E_\theta\, h(Z), \quad \text{ for integrable } h.
\end{align}

Numerical problems may arise in solving (\ref{eq:levelAlpha}--\ref{eq:uncorr}) for $\theta_0$ far away from the reference parameter used for sampling. Combining appropriately weighted samples from several different reference values can help to keep the effective sample size from getting too small for any $\theta_0$. For further references on Monte Carlo inference see
\citet{jockel1986finite,forster1996monte,mehta2000efficient}.

\subsection{Sampling Gaussians with Affine and Quadratic Constraints}\label{sec:affine}

In the case where $Y$ is Gaussian, several simplifications are possible. For one, there are many ways to sample from a truncated multivariate Gaussian distribution. In this paper, we use hit-and-run Gibbs sampling algorithms, while \citet{pakman2014exact} suggest another approach based on Hamiltonian Monte Carlo.

Efficient sampling from multivariate Gaussian distributions under such constraints is the main algorithmic challenge for most of the Gaussian selective tests proposed in this paper. The works cited above use the saturated model exclusively which means they do not require any sampling.

In many cases, the sampling problem may be greatly facilitated by refining the selection variable that we use. For example, \citet{lee2016exact} propose conditioning on the variables selected by the lasso as well as the signs of the fitted $\hat\beta_j$, leading to a selection event consisting of a single polytope in $\R^n$. If we condition only on the selected variables and not on the signs, the selection event is a union of up to $2^s$ polytopes, where $s$ is the number of variables in the selected model (though most of the polytopes might be excluded after conditioning on $U$).

Refining the selection variable never impairs the selective validity of the procedure, but it typically leads to a loss in power. However, this loss of power may be quite small if, for example, the conditional law puts nearly all of its mass on the realized polytope. This price in power is acceptable if it is the only way to obtain a tractable test. Quantifying the tradeoff between computation and power is an interesting topic for further work.

When carrying out selective $t$-tests, it is necessary to condition further on the realized vector length $\|Y\|$, adding a quadratic equality constraint to the support. To deal with this, we sample instead from a ball and project the samples onto the sphere using an importance sampling scheme. Appendix~\ref{sec:sphereSample} gives details.

\section{Selective Inference in Non-Gaussian Settings}\label{sec:examples}

In this section we describe tests in two simple non-Gaussian settings, selective inference in a binomial problem, and tests involving a scan statistic in Poisson process models. More generally, we address the question of selective inference in generalized linear models.

\subsection{Selective Clinical Trial}\label{sec:clinical}

To illustrate the application of our approach in a simple non-Gaussian setting we discuss a selective clinical trial with binomial data. The experiment discussed here is similar to an adaptive design proposed by~\citet{sill2009drop}.

Consider a clinical trial with $m$ candidate treatments for heart disease. We give treatment $j$ to $n_j$ patients for $0 \leq j \leq m$, with $j=0$ corresponding to the placebo. The number of patients on treatment $j$ to suffer a heart attack during the trial is
\begin{equation}
  Y_j \simind \text{Binom}(p_j, n_j), \quad \text{ with }   \log \frac{p_j}{1-p_j} = \left\{\begin{matrix} \theta & j = 0\\ \theta - \beta_j & j > 0\end{matrix}\right. ,
\end{equation}
so $\beta_j$ measures the efficacy of treatment $j$. The likelihood for $Y$ is
\begin{equation}
  Y \sim \exp\left\{\theta \sum_{j=0}^m y_j - \sum_{j=1}^m \beta_j y_j
    - \psi(\theta, \beta) \right\}\;\prod_{j=0}^m\binom{Y_j}{n_j},
\end{equation}
an exponential family with $m+1$ sufficient statistics. Define $\hat p_j = Y_j/n_j$, and let $\hat p_{(j)}$ denote the $j$th smallest order statistic.

After observing the data, we select the best $k<m$ treatments in-sample, then construct a confidence interval for each one's odds ratio relative to placebo. If there are ties, we select all treatments for which $\hat p_j \leq \hat p_{(k)}$ (so that we could possibly select more than $k$ treatments).

For simplicity, assume that treatments $1,\ldots,k$ are the ones selected. Inference for $\beta_1$ is then based on the conditional law
\begin{equation}
  \cL_{\beta_1}\left( Y_1  \Gv
    \sum_{j=0}^m Y_j,\;\, Y_2, \ldots, Y_J, \;
     \{j=1 \text{ selected}\}\right)
\end{equation}
Under this law, $Y_2,\ldots,Y_m$ are fixed, as is $Y_0+Y_1$, with $Y_0$ and $Y_1$ the only remaining unknowns. Before conditioning on selection, we have the two-by-two multinomial table

\renewcommand{\arraystretch}{2}

\begin{center}
\begin{tabular}{ r|c|c| }
\multicolumn{1}{r}{}
 &  \multicolumn{1}{c}{Control}
 & \multicolumn{1}{c}{Treatment} \\
\cline{2-3}
Heart attack & $Y_0$ & $Y_1$ \\
\cline{2-3}
No heart attack & $n_0-Y_0$ & $n_1-Y_1$ \\
\cline{2-3}
\end{tabular}
\end{center}

The margins are fixed, and conditioning on selection gives an additional constraint that $Y_1 \leq n_1 \hat p_{(k)}$, where the right-hand side is known after conditioning on the other $Y_j$. Rejecting for conditionally extreme $Y_1$ amounts to a selective Fisher's exact test. Aside from the constraint on its support, the distribution of $Y_1$ is hypergeometric if $\beta_1=0$ and otherwise noncentral hypergeometric with noncentrality parameter $\beta_1$. We can use this family to construct an interval for $\beta_1$.

\subsection{Poisson Scan Statistic}\label{sec:poisson}

As a second simple example, consider observing a Poisson process $Y=\{Y_1,\ldots,Y_{N(Y)}\}$ on the interval $[0,1]$ with piecewise-constant intensity, possibly elevated in some unknown window $[a,b]$. That is, $Y\sim \text{Poisson}(\lambda(t))$ with
\begin{equation}
  \lambda(t) = \begin{cases}
    e^{\alpha+\beta} &  t\in[a,b]\\
    e^\alpha &  \text{otherwise.}
  \end{cases}
\end{equation}
Our goal is to locate $[a,b]$ by maximizing some scan statistic, then test whether $\beta>0$ or construct a confidence interval for it. Assume we always have $[\hat a, \hat b] = [Y_i,Y_j]$ for some $i,j$; this is true, for example, if we use the multi-scale-adjusted likelihood ratio statistic proposed in \citet{rivera2013optimal}.

The density of $Y$ can be written in exponential family form as
\begin{align}
  Y
  &\sim \exp\left\{ \sum_{i=1}^{N(y)} \log \lambda(y_i)
    - \int_0^1 \lambda(s)\,ds\right\} \\[3pt]
  & = \exp\big\{\alpha\, N(Y) + \beta \,T(y) - \psi(\alpha,\beta)\big\},
\end{align}
where
\begin{equation}
  T(y) = \sum_{i=1}^{N(y)} \1\{y_i\in [a,b]\} \quad\text{ and }\quad \psi(\alpha,\beta) = e^\alpha(1-b+a) + e^{\alpha+\beta}(b-a).
\end{equation}
If $A$ is the event that $[a,b]$ is chosen, we carry out inference with respect to $\cL_\beta\left(T \gv N, A\right)$. Note that under $\beta=0$ and conditional on $N$, $Y$ is an i.i.d. uniform random sample on $[0,1]$.

Once we condition on the event $\{a,b\in Y\}$, the other $N-2$ values are uniform. Thus, we can sample from $\cL_\beta\left(T \gv N, A\right)$ with $\beta=0$ by taking $Y$ to include $a, b$, and $N-2$ uniformly random points, then rejecting samples for which $[a,b]$ is not the selected window.

\subsection{Generalized Linear Models}\label{sec:glm}

Our framework extends to logistic regression, Poisson regression, or other generalized linear model (GLM) with response $Y$ and design matrix $\bX$, since the GLM model may be represented as an exponential family of the form
\begin{align}
  Y &\sim \exp\left\{ \beta'\;\bX'y -
    \psi(\bX\beta)\right\}\; f_0(y).
\end{align}

As a result, we can proceed just as we did in the case of linear regression in the reduced model, conditioning on $U={\bX_{M\setminus j}}'Y$ and basing inference on $\cL_{\beta_j^{M}}(X_j'Y \gv U, A)$.

A difficulty may arise for logistic or Poisson regression due to the discreteness of the response distribution $Y$. If some control variable $X_1$ is continuous, then for almost every realization of $X$, all configurations of $Y$ yield unique values of $U=X_1'Y$. In that case, conditioning on $X_1'Y$ means conditioning on $Y$ itself. No information is left over for inference, so that the best (and only) exact level-$\alpha$ selective test is the trivial one $\phi(Y)\equiv\alpha$. By contrast, if all of the control variables are discrete variables like gender or ethnicity, then conditioning on $U$ may not constrain $Y$ too much.

Because $\bX'Y$ is approximately a multivariate Gaussian random variable, a more promising approach may be to base inference on the asymptotic Gaussian approximation as in \citet{taylor2016post}.

\section{Simulation: High-Dimensional Regression}\label{sec:simulation}

As a simple illustration, we compare selective inference in linear regression after the lasso for $n=100,p=200$. Here, the rows of the design matrix $\bX$ are drawn from an equicorrelated multivariate Gaussian distribution with pairwise correlation $\rho=0.3$ between the variables. The columns are normalized to
have length 1.

We simulate from the model
\begin{equation}
  Y \sim N(\bX \beta, I_n),
\end{equation}
with $\beta$ 7-sparse and its non-zero entries set to $7$. The magnitude of $\beta$ was chosen so that
\sampOrData splitting with half the data yielded a superset of the true variables on roughly 20\% of instances.
For \sampOrData splitting and carving,  $Y$ is partitioned into selection and inference data sets $Y_{1}$ and $Y_{2}$, containing $n_1$ and $n_2=n-n_1$ data points respectively.

We assume the error variance is known and carry out the Lasso on $Y_1$ with Lagrange parameter
$$\lambda=2\E(\|X^T\epsilon\|_{\infty}), \quad \epsilon \sim N(0, I_n)$$ as described in \citep{negahban_unified}. We then compare two post-selection inference procedures:
\begin{description}
\item[\capSampOrData Splitting after Lasso on $Y_{1}$ ($\text{Split}_{n_1}$):] Use the lasso on $Y_{1}$ to select the model, and use $Y_{2}$ for inference.
\item[\capSampOrData Carving after Lasso on $Y_{1}$ ($\text{Carve}_{n_1}$):] Use the lasso on $Y_{1}$ to select the model, and use $Y_{2}$ and whatever is left over of $Y_{1}$ for inference.
\end{description}
For the \sampOrData carving procedures, we use the selected-model $z$-test of Section~\ref{sec:reducedModel}.
In addition, we condition on the signs of the active lasso coefficients, so procedure $\text{Carve}_{100}$ is the inference-after-lasso test proposed in \citet{lee2016exact}.\footnote{Because of the form of the selection event when we use the lasso after $n$ data points, the test statistic is conditionally independent of $\proj_{\bX_{M}}^\perp Y$. Thus, there is no distinction between the saturated- and selected-model $z$-tests after the lasso on all $n$ data points.}

We know from Theorem~\ref{thm:ssInadm} that procedure $\text{Carve}_{n_1}$ strictly dominates procedure $\text{Split}_{n_1}$ for any $n_1$, but there is a selection--inference tradeoff between data-carving procedures $\text{Carve}_n$ and $\text{Carve}_{n_1}$ for $n_1<n$. $\text{Carve}_{n}$ uses all of the data for selection, and is therefore likely to select a superior model, whereas procedure $\text{Carve}_{n_1}$ reserves more power for the second stage.

Let $R$ be the size of the model selected and $V$ the number of noise variables included. We compare the procedures with respect to aspects of their selection performance:
\begin{itemize}
  \item chance of screening, i.e. obtaining a correct model ($\P(R-V=7)$ or $p_{\text{screen}}$).
  \item expected number of noise variables selected ($\E[V]$),
  \item expected number of true variables selected ($\E[R-V]$),
  \item false discovery rate of true variables selected
    ($\E[V/\max(R, 1)]$ or FDR),
\end{itemize}

Conditional on having obtained a correct model, we also compare them on aspects of their second stage performance:
\begin{itemize}
\item probability of correctly rejecting the null for one of the true variables (Power),
\item probability of incorrectly rejecting the null for a
  noise variable (Level).
\end{itemize}

\begin{table}
  \centering
  \begin{tabular}{|l|c|c|c|c|c|c|c|}
\hline
            Algorithm &  $p_{\text{screen}}$ &  $\mathbb{E}[V]$ &  $\mathbb{E}[R-V]$ &  FDR &  Power &  Level \\
\hline
 $\text{Carve}_{100}$ &                 0.99 &             8.13 &               6.99 & 0.54 &   0.80 &   0.05 \\
  $\text{Split}_{50}$ &                 0.09 &             9.13 &               4.74 & 0.66 &   0.93 &   0.06 \\
  $\text{Carve}_{50}$ &                 0.09 &             9.13 &               4.74 & 0.66 &   0.99 &   0.06 \\
  $\text{Split}_{75}$ &                 0.68 &             9.24 &               6.59 & 0.58 &   0.47 &   0.05 \\
  $\text{Carve}_{75}$ &                 0.68 &             9.24 &               6.59 & 0.58 &   0.97 &   0.06 \\
\hline
\end{tabular}

\caption{Simulation results. $p_{\text{screen}}$ is the probability of successfully selecting all 7 true variables, and Power is the power, conditional on successful screening, of tests on the true variables. The more data we use for selection, the better the selected model's quality is, but there is a cost in second-stage power. $\text{Carve}_{75}$ appears to be finding a good tradeoff between these competing goals. $\text{Carve}_{n_1}$ always outperforms $\text{Split}_{n_1}$, as predicted by Theorem~\ref{thm:ssInadm}.}
\label{tab:results}
\end{table}

\begin{table}
\centering
\begin{tabular}{|l|c|c|c|c|c|c|c|}
\hline
            Algorithm &  $p_{\text{screen}}$ &  $\mathbb{E}[V]$ &  $\mathbb{E}[R-V]$ &  FDR &  Power &  Level \\
\hline
 $\text{Carve}_{100}$ &                 0.97 &             8.11 &               6.97 & 0.54 &   0.80 &   0.04 \\
  $\text{Split}_{50}$ &                 0.09 &             9.20 &               4.77 & 0.66 &   0.93 &   0.05 \\
  $\text{Carve}_{50}$ &                 0.09 &             9.20 &               4.77 & 0.66 &   0.99 &   0.06 \\
\hline
\end{tabular}

  \caption{Simulation results under misspecification. Here, errors $\epsilon$ are drawn independently from Student's $t_5$. Our conclusions are identical to Table~\ref{tab:results}.}
\label{table:T5}
\end{table}

\begin{figure}
  \centering
  \begin{subfigure}[t]{.4\textwidth}
    % source code: code/lasso_example/batch.py
    \includegraphics[width=\textwidth]{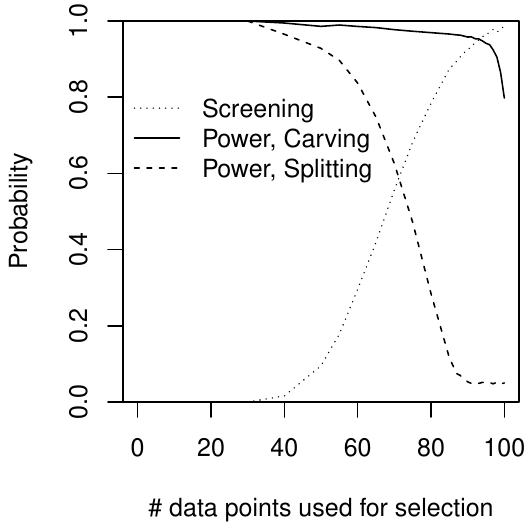}
    \caption{Probability of successful screening, and power conditional on screening, for $\text{Split}_{n_1}$ and $\text{Carve}_{n_1}$.}
    \label{fig:lassoTRADE1}
  \end{subfigure}
  \hspace{.1\textwidth}
  \begin{subfigure}[t]{.4\textwidth}
    % source code: code/lasso_example/batch.py
    \includegraphics[width=\textwidth]{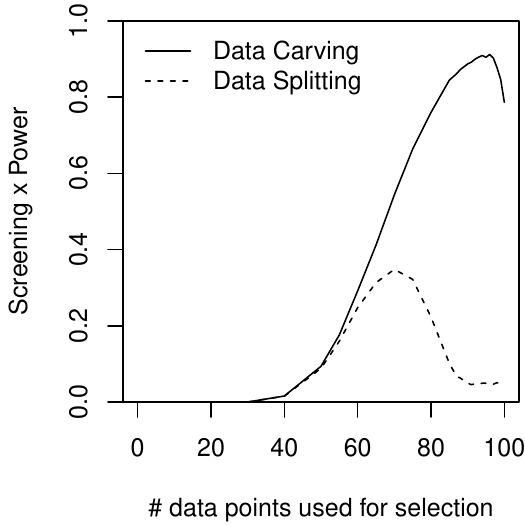}
    \caption{Probability of successful screening times power conditional on screening, for $\text{Split}_{n_1}$ and $\text{Carve}_{n_1}$.}
    \label{fig:lassoTRADE2}
  \end{subfigure}
  \caption{Tradeoff between power and model selection.
As $n_1$ increases and more data is used in the first stage, we have a better chance of successful screening (picking all the true nonzero variables). However, increasing $n_1$ also leads to reduced power in the second stage. Data splitting suffers much more than data carving, though both are affected.}
  \label{fig:lassoTRADEOFF}
\end{figure}

The results, shown in Table~\ref{tab:results}, bear out the intuition of Section~\ref{sec:admissibility}.
Because procedure $\text{Carve}_{100}$ uses the most information in the first stage, it performs best in terms of model selection, but pays a price in lower second-stage power relative to $\text{Split}_{50}$ or $\text{Carve}_{50}$. The procedure $\text{Carve}_{50}$ clearly dominates $\text{Split}_{50}$, as expected. Increasing $n_1$ from 50 to 75 improves $p_{\text{screen}}$ for $\text{Split}_{75}$, but $\text{Split}_{75}$ suffers a drop in power. Procedure $\text{Carve}_{75}$ seems to strike a better compromise.

Figure~\ref{fig:lassoTRADEOFF} shows the tradeoff curve of model selection success (as measured by the probability of successful screening) against second-stage power conditional on successful screening. As $n_1$ increases, stage-one performance improves while stage-two performance declines, but the decline is much slower for \sampOrData carving. Surprisingly, $\text{Carve}_{98}$ and $\text{Carve}_{99}$ have much higher power than $\text{Carve}_{100}$: 91\%, 86\%, and 80\% respectively. We cannot explain why holding out just one or two data points in the first stage improves power so dramatically. Better understanding this tradeoff is an interesting topic of further work.

Finally, to check the robustness of \sampOrData carving, we replace the Gaussian errors with independent errors drawn from Student's $t$ distribution with five degrees of freedom. The numbers barely change at all; see Table~\ref{table:T5}. \citet{tian2017asymptotics} rigorously analyze the case of non-Gaussian errors.

\section{Conditioning as a Device for Multiple Inference}\label{sec:multiple}

To this point we have argued for controlling selective type I error as a goal in its own right, but it can also serve as a device for controlling more traditional multiple inference goals. In this section we discuss two examples: confidence intervals for selected parameters that control the false coverage-statement rate (FCR) and familywise error rate (FWER).

Suppose that $\theta_q,\;q=1,\ldots,m$ correspond to parameters of a common (fixed) model $M$. We adaptively designate a number $R(Y)=|\hcQ(Y)|$ of them as interesting and construct a confidence interval $C_q(Y)$ for each $q\in \hcQ$. \citet{benjamini2005false} propose controlling the {\em false coverage-statement rate} (FCR)
\begin{equation}\label{eq:fcrV}
  \E\left[\frac{V}{\max(R,1)}\right],\quad \text{ where }\quad
  V(Y) = \left|\left\{q:\, q\in\hcQ, \;\theta_q(F) \notin C_q(Y)\right\}\right|
\end{equation}
is the number of non-covering intervals constructed. 

Other authors have addressed inference after selection by proposing to control the FWER, the chance that any selected test incorrectly rejects the null or any constructed confidence interval fails to cover its parameter. For example, the ``post-selection inference'' (PoSI) method of \citet{berk2013valid} constructs simultaneous $(1-\alpha)$ confidence intervals for the least-squares parameters of all linear regression models that were ever under consideration. As a result, no matter how we choose the model, the overall probability of constructing any non-covering interval is controlled at $\alpha$.

By choosing appropriate selection variables $S_q$, we can control the FCR or FWER as desired using intervals with selective coverage. Our proof generalizes and extends a result in~\citet{weinstein2013selection}, who also use conditional control to achieve FCR control in a specialized setting. Using a similar proof, we also show that using an {\em adaptive Bonferroni rule}, which adjusts the test's level based on the (random) number of intervals actually constructed, can achieve FWER control.

\begin{proposition}[FCR and FWER Control via Selective Error Control]\label{prop:fcrCont}
  Assume $\cQ$ is countable with each $q\in \cQ$ corresponding to a different parameter $\theta_q$ for the same model $M$. Let $R(Y) = |\hcQ(Y)|$ with $R(Y)<\infty$ a.s., and define $V(Y)$ as in~(\ref{eq:fcrV}).

If each $C_q$ enjoys coverage at level $1-\alpha$ given $S_q = \left(\1_{A_q}(Y),R(Y)\right)$, then the collection of intervals $(C_q, q\in \hcQ)$ controls the FCR at level $\alpha$:
\begin{equation}
  \E\left[\frac{V}{\max(R,1)}\right] \;\;\leq\;\;
  \E\left[\frac{V}{R} \Gv R \geq 1\right] \;\;\leq\;\; \alpha.
\end{equation}
If each $C_q$ enjoys coverage at level $1-\alpha/R(Y)$ given $S_q$, then $(C_q, q\in \hcQ)$ controls the FWER at level $\alpha$:
\begin{equation}
  \P\left[V \geq 1\right] \;\;\leq\;\; \alpha.
\end{equation}

\end{proposition}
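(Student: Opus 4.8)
The plan is to expand the count of non-covering intervals as a sum over $q$, stratify on the realized value of $R$, apply the selective coverage guarantee term by term, and let the reciprocal weight $1/R$ cancel through a simple counting identity. Throughout I would write
$V(Y) = \sum_{q \in \cQ} \1_{A_q}(Y)\,\1\{\theta_q(F) \notin C_q(Y)\}$,
using that $q \in \hcQ$ precisely when $Y \in A_q$.

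First I would dispose of the easy inequality $\E[V/\max(R,1)] \le \E[V/R \mid R \ge 1]$. When $R = 0$ no intervals are constructed, so $V = 0$ and the summand vanishes; hence the expectation is supported on $\{R \ge 1\}$, where $\max(R,1) = R$. This gives $\E[V/\max(R,1)] = \P(R \ge 1)\,\E[V/R \mid R \ge 1] \le \E[V/R \mid R \ge 1]$, since $\P(R\ge 1)\le 1$.

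For the main inequality I would write $\E[(V/R)\1\{R \ge 1\}] = \sum_{r \ge 1} r^{-1}\,\E[\1\{R = r\}\,V]$, interchange the (nonnegative, so Tonelli applies) summations over $r$ and $q$, and then observe that $\{Y \in A_q\} \cap \{R = r\}$ is exactly the event $\{S_q = (1,r)\}$. Conditioning on $S_q$ and invoking the generalized selective coverage guarantee (the analogue of~(\ref{eq:selT1S}) for confidence sets) bounds $\E[\1\{\theta_q(F) \notin C_q(Y)\} \mid S_q = (1,r)] \le \alpha$, so each summand is at most $\alpha\,\P(A_q, R = r)$. The step that carries the real content is the counting identity that eliminates the $1/r$ weight: on $\{R = r\}$ exactly $r$ of the events $A_q$ occur, so $\sum_q \1_{A_q}\1\{R=r\} = r\,\1\{R=r\}$ and therefore $\sum_q \P(A_q, R = r) = r\,\P(R = r)$. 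Substituting cancels the $r^{-1}$ factor and collapses the double sum to $\alpha\sum_{r \ge 1}\P(R=r) = \alpha\,\P(R \ge 1)$; dividing by $\P(R \ge 1)$ yields $\E[V/R \mid R \ge 1] \le \alpha$.

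I do not expect a genuine obstacle here so much as a single point where the construction must be used exactly right: the definition $S_q = (\1_{A_q}(Y), R(Y))$ bundles $R$ into the conditioning variable precisely so that the reciprocal weight $1/R$ is measurable with respect to $\sF(S_q)$ and can be pulled outside the selective non-coverage bound. Conditioning on $\1_{A_q}$ alone would not suffice, because then $1/R$ would not be determined at selection time and the clean cancellation $\sum_q \P(A_q, R=r) = r\,\P(R=r)$ could not be leveraged. Flagging and justifying this role of $R$ is the one place where care is needed; the remaining manipulations are routine applications of Tonelli and the tower property.
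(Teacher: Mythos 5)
Your proof is correct and follows essentially the same route as the paper's: both decompose $V$ as a sum of indicators $\1_{A_q}\1\{\theta_q(F)\notin C_q\}$ over $q\in\cQ$, apply the selective coverage guarantee conditional on $S_q=\left(\1_{A_q}(Y),R(Y)\right)$, and cancel the $1/R$ weight via the counting identity $\sum_q \1_{A_q}\1\{R=r\} = r\,\1\{R=r\}$. The only difference is cosmetic --- you stratify explicitly on $\{R=r\}$ and sum, while the paper writes the same computation as a conditional expectation given $R$ --- and your explicit treatment of the easy inequality $\E[V/\max(R,1)]\le\E[V/R \Gv R\ge 1]$ is a detail the paper leaves implicit.
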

\begin{proof}
  Let $V_q(Y) = \1\left\{q\in \hcQ(Y),\, \theta_q(F) \notin C_q(Y)\right\}$,
  so that $V = \sum_{q\in\cQ}V_q$. If $C_q$ has level-$\alpha$ selective coverage, then for $R\geq 1$, and for any $F\in M$,
  \begin{equation}
    \E_F\left[V \gv R \right]
    \;\;=\;\; \sum_{q\in\cQ} \E_F\left[ V_q \gv R\right]
    \;\;\leq\;\; \sum_{q\in\cQ}
    \alpha\,\E_F\left[ \1_{A_q}(Y) \gv R\right]
    \;\;=\;\; \alpha R,
  \end{equation}
  hence $\E\left[V/R \gv R\right] = \alpha$ for each $R\geq 1$.

  We can repeat the argument when $C_q$ has level-$\alpha/R$ selective coverage, we obtain
  $\E_F\left[V \gv R \right] \leq \alpha$. Marginalizing each bound over $R(Y)$ gives the result.
\end{proof}

However, the converse of Proposition~\ref{prop:fcrCont} is not true: FWER control does {\em not} in general guarantee control of relevant selective error rates. For example, suppose that we construct an interval for the effect of red meat consumption on heart disease ($Q(Y)=1$) with probability 0.9 and for the effect of statins on heart disease ($Q(Y)=2$) otherwise. If $C_1$ and $C_2$ have selective error rates $\alpha_1=0.02$ and $\alpha_2 = 0.3$ respectively, the overall FWER is still controlled at $\alpha=0.05$.

Does our conservatism when asking about smoking compensate for our anti-conservatism when asking about coffee? Perhaps not; those readers who are primarily interested in statins will be consistently misled, and readers who are primarily interested in red meat consumption will be see unnecessarily conservative intervals. As such, averaging our error rates across the two questions, with two different interpretations, seems inappropriate.

More problematically, if the different questions correspond to different and non-overlapping models --- for example, if we examine residuals to decide between a Poisson log-linear model and a negative-binomial model --- then it is especially unintuitive to focus on error rates averaged across the different choices of model.

By contrast, if the different questions represent a bag of relatively anonymous, {\em a priori} undifferentiated hypotheses which we are prioritizing for follow-up research, such as in a genome-wise association study, then an error rate like the FDR is likely a better proxy for our scientific goals.

\section{Discussion}\label{sec:discussion}

Selective inference concerns the properties of inference carried out after using a data-dependent procedure to select which questions to ask. We can recover the same long-run frequency properties among answers to {\em selected} questions that we would obtain in the classical non-adaptive setting, if we follow the guiding principle of selective error control:
\begin{center}
The answer must be valid, given that the question was asked.
\end{center}

Happily, living up to this principle can be a simple matter in exponential family models including linear regression, due to the rich classical theory of optimal testing in exponential family models. Even if we are possibly selecting from a large menu of diverse and incompatible models, we can still design tests one model at a time and control the selective error using the test designed for the selected model. We generally pay a price for conditioning, so it is desirable to condition on as little as possible. \capSampOrData carving can dramatically improve on \sampOrData splitting by using the leftover information in $Y_{1}$, the data set initially designated for selection.

Many challenges remain. Deriving the cutoffs for sample carving tests can be computationally difficult in general. In addition, the entire development of this article takes the model selection procedure $\hcQ$ as given, when in reality we can choose $\hcQ$. More work is needed to learn what model selection procedures lead to favorable second-stage properties.

As data sets and research questions become more and more complex, we have less and less hope of specifying  adequate statistical models ahead of time. As such, a key challenge of complex research is to balance the goal of choosing a realistic model against the goal of inference once we have chosen it. We hope that the ideas in this article represent a step in the right direction.

\section*{Reproducibility}

A git repository with code to generate the figures for this file is available at the first author's website.

\section*{Acknowledgements}
William Fithian was supported by National Science Foundation VIGRE grant DMS-0502385 and the Gerald J. Lieberman Fellowship.
Dennis Sun was supported in part by the Stanford Genome Training Program (NIH/NHGRI T32 HG000044) and the Ric Weiland Graduate Fellowship.
Jonathan Taylor was supported in part by National Science Foundation grant DMS-1208857 and
Air Force Office of Sponsored Research grant 113039.
We would like to thank Stefan Wager, Trevor Hastie, Rob Tibshirani, Brad Efron, Yoav Benjamini, Larry Brown, Maxwell Grazier G'sell, Subhabrata Sen, and Yuval Benjamini for helpful discussions.

\bibliographystyle{plainnat}
\bibliography{biblio}

\begin{appendix}

\section{Proof of Proposition~\ref{prop:disciplineWide}}\label{sec:disciplineWideProof}
\begin{proof}
For group $i$, let $R_i$ be the number of true nulls selected, i.e.,
\[R_i = \left|\left\{(M,H_0):\,(M,H_0)\in \hcQ_i(Y_i),\; F_i\in H_0\sub M\right\}\right|,\]
and let $V_i$ denote the number of false rejections. If $Z_n^V=\sum_{i=1}^n V_i$ and $Z_n^R = \sum_{i=1}^n R_i$, then we need to show $\limsup_{n\to\infty} Z_n^V / Z_n^R \leq \alpha$.

By design, ${0 \leq V_i \leq R_i}$ and ${\E(V_i) \leq \alpha\,\E(R_i)}$. As a result, $\E[Z_n^V] / \E[Z_n^R] \leq \alpha$ for every $n$, so we just need to show that the two sums are not far from their expectations. Because
\[
\sum_{i=1}^\infty \frac{\Var(R_i)}{i^2} \leq B\,\sum_{i=1}^\infty \frac{1}{i^2} < \infty,
\]
we can apply Kolmogorov's strong law of large numbers to the independent but non-identical sequence $R_1,R_2,\ldots$ to obtain
\[
\frac{1}{n}(Z_n^R - \E Z_n^R) \toAS 0, \quad\text{ so }\quad \left|\frac{Z_n^R}{\E Z_n^R} - 1\right| \leq \left|\frac{\delta}{n}(Z_n^R - \E Z_n^R)\right| \toAS 0.
\]
As for $Z_n^V$, we have
\[
\frac{1}{n}(Z_n^V - \E Z_n^V) \toAS 0, \quad\text{ so }\quad
\frac{Z_n^V}{\E Z_n^R} - \alpha \leq \frac{\delta}{n}(Z_n^V - \E Z_n^V) \toAS 0;
\]
in other words, $Z_n^R/\E Z_n^R \toAS 1$ and $\limsup_{n} Z_n^V/\E Z_n^R\leqAS \alpha$.
\end{proof}

\section{Monte Carlo Tests and Confidence Intervals: Details}\label{sec:mcUMPU}

Assume $Z$ arises from a one-parameter exponential family
\begin{equation}
  Z \sim g_\theta(z) = e^{\theta z - \psi(\theta)}\,g_0(z).
\end{equation}

We wish to compute (by Monte Carlo) the UMPU two-sided rejection region for the hypothesis $H_0:\,\theta = \theta_0$. Let $U\sim \text{Unif}[0,1]$ be an auxiliary randomization variable.

Define the dictionary ordering on $[0,1]$:
\begin{equation}
  (z_1,u_1) \prec (z_2,u_2) \iff z_1 < z_2 \text{ or } (z_1 = z_2 \text{ and } u_1 < u_2).
\end{equation}
If $\Gamma_1 = (c_1, \gamma_1)$ and $\Gamma_2 = (c_2, 1-\gamma_2)$, then the region
\begin{equation}
  R_{\Gamma_1,\Gamma_2} = \{(z,u):\, (z,u) \prec \Gamma_1 \text{ or } (z,u) \succ \Gamma_2\}
\end{equation}
implements the rejection region for the test with cutoffs $c_1,c_2$ and boundary randomization parameters $\gamma_1,\gamma_2$.

For $\Gamma_1 \prec \Gamma_2$, write
\begin{align}
  K_1(\Gamma_1,\Gamma_2;\theta) &= \P_\theta(R_{\Gamma_1,\Gamma_2}) - \alpha\\
  K_2(\Gamma_1,\Gamma_2;\theta) &= \E_\theta(Z \gv (Z,U)\in R_{\Gamma_1,\Gamma_2}^C) - \E_\theta(Z),
\end{align}
so that the correct cutoffs $\Gamma_i$ are those for which $K_1(\Gamma_1,\Gamma_2;\theta)=K_2(\Gamma_1,\Gamma_2;\theta)=0$.
For fixed $\theta$, $K_1$ is decreasing in $\Gamma_1$ and increasing in $\Gamma_2$, while $K_2$ is increasing in both $\Gamma_1$ and $\Gamma_2$.

Let $(Z_1,W_1), (Z_2,W_2), \ldots$ be a sequence of random variables for which
\begin{align}
  \widehat \E_\theta^n h(Z) &=
  \frac{\sum_{i=1}^n W_i h(Z_i)e^{\theta Z_i} }
  {\sum_{i=1}^n W_ie^{\theta Z_i}}\\
  &\toAS \E_\theta h(Z).
\end{align}
for all integrable $h$. This would be true if $(Z_i,W_i)$ are a valid i.i.d. sample or i.i.d. importance sample from $g_0$, or if they come from a valid Markov Chain Monte Carlo algorithm.

If $\widehat K_i^{n}$ are defined analogously to $K_i$ for $i=1,2$, with $\E_\theta$ and $\P_\theta$ replaced with their importance-weighted empirical versions $\widehat \E_\theta^n$ and $\widehat\P_\theta^n$, then $\widehat K_i^n \toAS K$ pointwise as $n\to\infty$, and  $\widehat K_i^n$ satisfy the same monotonicity properties almost surely for each $n$. As a result, we have almost sure convergence on compacta for $(\widehat K_1^n,\widehat K_2^n)$:
\begin{equation}
  \sup_{(\Gamma_1,\Gamma_2)\in G} \max_i \left\|\widehat K_i^n(\Gamma_1,\Gamma_2;\theta) - K_i(\Gamma_1,\Gamma_2;\theta)\right\|
\end{equation}
for each $\theta$, for compact $G \in \left(\R\times [0,1]\right)^2$.

We carry out our tests by solving for $\Gamma_1$ and $\Gamma_2$ which solve $\widehat K_1^n$ and $\widehat K_2^n$, in effect defining the UMPU tests for a one-parameter exponential family through the approximating empirical measure. Specifically, we can define \begin{equation}
  \widehat \Gamma_2(\Gamma_1;\theta) = \inf \left\{\Gamma_2:\, \widehat K_1^n(\Gamma_1,\Gamma_2;\theta) = 0\right\},
\end{equation}
with $\widehat \Gamma_2=\infty$ if the set is empty. That is, for a given lower cutoff we define the upper cutoff to obtain a level-$\alpha$ acceptance region if that is possible.
Then, $\widehat K_2^n\left(\Gamma_1,\widehat\Gamma_2(\Gamma_1;\theta);\theta\right)$ is an increasing function and we can solve it using binary search. Let $\widehat R_\theta$ denote the rejection region so obtained.

Note that $(z,u)$ is in the left-tail of $\widehat R_\theta$ if and only if ${\widehat K_2^n\left((z,u),\widehat\Gamma_2((z,u));\theta\right) < 0}$. This fact, paired with an analogous test for whether $(z,u)$ is in the right tail, gives us a quick way to carry out the test. It also allows us to quickly find the upper and lower confidence bounds for the approximating empirical family, via binary search.

\section{Sampling for the Selective $t$-Test: Details}\label{sec:sphereSample}

Let $C \sub \R^k$ denote a set with nonempty interior and consider the problem of integrating some integrable function $h(y)$ against the uniform probability measure on $C \cap  S^{k-1}$, where $S^{k-1}$ is the unit sphere of dimension $k-1$, assuming the intersection is non-empty. Assume we are given an i.i.d. sequence of uniform samples $Y_1,Y_2, \ldots$ from $C \cap B^k$, where $B^k$ is the unit ball.

Let $R \sim \frac{r^{k-1}}{k}$, so that if $Z\sim\text{Unif}(S^{k-1})$, then $Y=RZ \sim
\text{Unif}(B^k)$. Let
\begin{equation}
  W(Z) =
  \left(\int_0^1\1\{rZ \in C\}\, \frac{r^{k-1}}{k}\, dr\right)^{-1}
\end{equation}

We can use the $Y_i$ for which $Z_i = Y_i/\|Y_i\| \in C$ as a sequence of importance samples with weights $W(Z_i)$, since
\begin{align}
 \E(h\left(Z\right) &\1\left\{Y,Z\in
      C\right\} W(Z))\\[5pt]
  &= \int_{S^{k-1}}\int_0^1 h(z)\1\left\{z,rz\in C\right\}
  W(z) \frac{r^{k-1}}{k}\, dr\,dz\\[5pt]
  &= \int_{S^{k-1}} h(z)\1\left\{z\in C\right\} \,dz\\[5pt]
  &= \E\left(h(Z) \1\left\{Z\in C\right\}\right).
\end{align}
\newcommand{\bQ}{{Q}}
To carry out the selective $t$-test of $H_0:\; \beta_j = 0$, we need to sample from
\begin{equation}
  \cL\left( \eta'Y \Gv \proj_{\bX_{M\setminus j}} Y,\;\; \|Y\|, \;\;A\right).
\end{equation}

Let $U= \proj_{\bX_{M\setminus j}} Y$, and let $\bQ\in\R^{n\times (n-|M|-1)}$ be such that $\bQ\bQ'=\proj_{\bX_{M\setminus j}}^\perp$. Then $L^2\triangleq \|\bQ'Y\|^2 = \|Y\|^2 - \|U\|^2$ is fixed under the selection event. Let
\begin{equation}
  C = \{v:\, U+\bQ v\in A\},
\end{equation}
so that $A_U = U + \bQ C$, an $(n-|M|-1)$-dimensional hyperplane intersected with $A$, is the event we would sample from for the selective $z$-test.

Under $H_0$, $Y$ is uniformly distributed on \begin{equation}
  (U+\bQ C)\cap \|Y\| S^{n-1} = U+\bQ \left(C \cap L S^{n-|M|-2}\right).
\end{equation}

Assume we can resample $Y^*$ uniformly from $A_U \cap (U + LB^{n-|M|-1})$, which is just sampling from $A_U$ with an additional quadratic constraint. Then $V^* = \bQ' (Y^*-U)$ is a sample from the ball of radius $L$, intersected with $C$. We can turn $V^*$ into an importance-weighted sample from the sphere via the scheme outlined above; then, the same importance weight suffices to turn $Y^*$ into a sample from the selective $t$-test conditioning set.

\end{appendix}

\end{document}